\theoremstyle{definition}
\newtheorem{thm}{Theorem}[section]
\newtheorem{cor}[thm]{Corollary}
\newtheorem{lem}[thm]{Lemma}
\newtheorem{prop}[thm]{Proposition}
\newtheorem{defn}[thm]{Definition}
\newtheorem{definition}[thm]{Definition}
\newtheorem{rem}[thm]{Remark}
\newcommand{\RR}{\mathbb R}
\newcommand{\CC}{\mathbb C}
\newcommand{\PP}{\mathbb P}
\newcommand{\cM}{\mathcal{M}}
\newcommand{\cA}{\mathcal{A}}
\newcommand{\bA}{\overline \cA}
\newcommand{\rM}{\mathrm{M}}
\newcommand{\tM}{\widetilde{\cM}}
\newcommand{\A}{\mathfrak{A}}
\def\bOmega{{\overline \Omega}}
\def\tOmega{\Omega}
\def\T{{\mathcal{T}}}
\def\Res{\operatorname{Res}}
\def\L{{\mathcal L}}
\def\triang{{\Delta}}
\def\tV{\tilde V}
\def\pt{{\rm pt}}
\def\dlog{\operatorname{dlog}}
\newcommand\OS{{\operatorname{OS}}}
\newcommand\oOS{{\overline{\OS}}}
\newcommand{\be}{{\overline e}}
\title{Canonical forms of oriented matroids}
\author{Christopher Eur}
\author{Thomas Lam}
\address{Department of Mathematics, University of Michigan, 2074 East Hall, 530 Church Street, Ann Arbor, MI 48109-1043, USA}
\email{\href{mailto:tfylam@umich.edu}{tfylam@umich.edu}}
\address{Carnegie Mellon University. Pittsburgh, PA. USA}
\email{ceur@cmu.edu}
\begin{document}
\begin{abstract}
Positive geometries are semialgebraic sets equipped with a canonical differential form whose residues mirror the boundary structure of the geometry.  Every full-dimensional projective polytope is a positive geometry.  Motivated by the canonical forms of polytopes, we construct a canonical form for any tope of an oriented matroid, inside the Orlik--Solomon algebra of the underlying matroid.  Using these canonical forms, we construct bases for the Orlik--Solomon algebra of a matroid, and for the Aomoto cohomology.  These bases of canonical forms are a foundational input in the theory of matroid amplitudes introduced by the second author.
\end{abstract}

\maketitle

\vspace{-20pt}

\section{Introduction}\label{sec:intro}

The emerging field of positive geometries bridges combinatorial algebraic geometry in mathematics and the study of scattering amplitudes in physics.
Functions such as scattering amplitudes or cosmological correlators are determined by their behavior at singularities, and positive geometries encode the boundary structures induced by these singularities via their canonical forms.
See \S\ref{ssec:defposgeom} for a definition of positive geometries.  Examples of positive geometries include projective polytopes and positive parts of toric varieties \cite{ABL}, totally nonnegative Grassmannians and flag varieties \cite{LamPG}, wondertopes \cite{BEPV}, and conjecturally, the amplituhedron \cite{AT}.

\smallskip
Let $\bA$ be a collection of hyperplanes in a real projective space $\PP^d(\RR)$.
The connected components of the complement $\overline U(\RR) = \PP^d(\RR)\setminus \bigcup \bA$ of the union of the hyperplanes are called \emph{chambers}.
When $\bA$ is \emph{essential}, i.e.\ the intersection of the hyperplanes is empty, the closure of each chamber is a projective polytope, which is a positive geometry.
Thus, an essential hyperplane arrangement $\bA$ is equipped with a collection of canonical forms $\Omega_{\overline P}$, one for each closed chamber $\overline P$.  These canonical forms, together with the classes they represent in algebraic de Rham cohomology of the complexification $\overline U(\CC)$, are the starting point of this work.

\smallskip
Oriented matroids are combinatorial abstractions of real hyperplane arrangements.  We point to \cite{ZAK24} for a survey.  Many outstanding problems about oriented matroids concern whether a property of real hyperplane arrangements extends to all oriented matroids.
Here, we show that the notion of canonical forms of chambers of real hyperplane arrangements extends well to all oriented matroids.

\smallskip
In Section~\ref{sec:mainthm}, we define the canonical form $\bOmega(\cM)$ of an oriented matroid $\cM$ as an element of the reduced Orlik--Solomon algebra $\oOS(\rM)$ of the underlying matroid $\rM$ of $\cM$.
We show in Theorem~\ref{thm:main} that $\bOmega(\cM)$ is the unique solution to a recursion, defined using the residue sequence (\cref{prop:OSexact}) of Orlik--Solomon algebras.
This recursion mirrors the defining recursive property of positive geometries.
In \cref{def:main}, we extend the definition to the canonical form $\bOmega(P)$ of any tope $P$ of $\cM$.

\smallskip
In \cref{sec:real}, we explain how our synthetic canonical form $\bOmega(P)$ equals the usual canonical form $\Omega_{\overline P}$ of a projective polytope $\overline P$ when the oriented matroid $\cM$ is realizable as a hyperplane arrangement.
In the realizable setting, Yoshinaga \cite{Yoshinaga} studied the canonical forms of chambers in a different formulation.  While this manuscript was in preparation, we learnt of the work of Brown and Dupont \cite{BD} who studied the canonical forms of chambers and were, like us, motivated by positive geometry.

\smallskip
The combinatorics of the Orlik--Solomon algebra $\oOS^\bullet(\rM)$ is traditionally studied using the no-broken-circuit (NBC) complex.
In \cref{thm:basis}, we produce a basis for $\oOS^\bullet(\rM)$ consisting of canonical forms, 
which provides a combinatorial approach to $\oOS^\bullet(\rM)$ that serves as an alternative to the NBC complex.
For instance, we recover the result of \cite{GZ} that expressed the graded dimensions of $\oOS^\bullet(\rM)$ in terms of the number of topes, and in the realizable case, we obtain Yoshinaga's chamber basis \cite{Yoshinaga}.  It would be interesting to give a simple description of the multiplicative structure constants of $\oOS^{\bullet}(\rM)$ with respect to the canonical form basis (see \cref{rem:mult}).

\smallskip
In \cref{thm:Aomoto}, we show that the canonical forms of bounded topes is a basis for the Aomoto cohomology $H^{r-1}(\oOS_\CC^\bullet, \omega)$ when the parameters in $\omega$ are generic.  In the realizable case, under a genericity hypothesis, Aomoto cohomology is isomorphic to a twisted cohomology group $H^\bullet(\overline U(\CC),\L_\omega)$ with values in a local system $\L_\omega$.  This is the starting foundational step for \cite{Lamampl}, where the result of this work is applied to define amplitudes for matroids.

\smallskip
Finally, in \cref{sec:examples} we discuss some examples of the construction of this work.

\subsection*{Acknowledgements}
C.E. was partially supported by the National Science Foundation grant DMS-2246518.
T.L. acknowledges support from the National Science Foundation under grants DMS-1953852 and DMS-2348799, and from the Simons Foundation under a Simons Fellowship.  We thank the Institute for Advanced Study, Princeton for supporting us during visits during which parts of this manuscript was completed.  T.L. thanks the Sydney Mathematical Research Institute for supporting a research visit during which parts of this manuscript was completed.  We thank the referees for their careful reading and helpful suggestions.

\section{Canonical forms of oriented matroids and topes}\label{sec:mainthm}

Let $E$ be a finite set.
For a sign vector $X \in \{+,0,-\}^E$, we write $X^-$ for the set $\{i\in E : X(i) = -\}$.
For a sequence $S$, we often abuse notation to write $S$ also for the set of its entries.
For oriented matroids, we follow the terminology and conventions as given in \cite{BLVSWZ99}.
Throughout, let $\cM$ be a loopless oriented matroid $\cM$ of rank $r$ on a ground set $E$.
We say that $\cM$ is \emph{acyclic} if it has no positive circuit.  For a subset $S\subseteq E$, we say that $S$ is \emph{acyclic in $\cM$} if the restriction $\cM|_S$ is acyclic.
Let $\rM$ denote the underlying matroid of $\cM$, and let $\A(\rM)$ denote the set of its atoms (i.e.\ rank 1 flats).
We write simply $\A$ when the matroid $\rM$ is understood.  A subset $I \subseteq \A$ is \emph{dependent} if its rank in $\rM$ is less than $|I|$.

\medskip
The \emph{Orlik--Solomon algebra} $\OS^\bullet_R(\rM)$ of $\rM$ over a unital commutative ring $R$ is the graded skew-commutative ring defined as a quotient of the exterior algebra of $R^{\A}$ by
\[
\OS^\bullet_R(\rM) := \frac{\bigwedge^\bullet R^{\A}}{\langle \partial e_S : S\subseteq \A \text{ is dependent}\rangle}.
\]
Here, we have denoted $e_S := e_{s_1} \wedge \dotsm \wedge e_{s_k}$ for a sequence $S = (s_1, \dotsc, s_k)$ in $\A$, where $e_a$ denotes the standard basis vector of $a\in \mathfrak A$ in $R^\A$, and we denote
\begin{equation}\label{eq:partialdefn}
\partial e_S := \sum_{i = 0}^{k-1} (-1)^{i} e_{S\setminus s_{k-i}}.
\end{equation}
We caution that this convention differs from the one often found in the literature; see Remark~\ref{rem:convention}.
The \emph{reduced Orlik--Solomon algebra} of $\rM$ is the subalgebra $\oOS^\bullet_R(\rM) := \ker \big(\partial: \OS^\bullet_R(\rM) \to \OS^\bullet_R(\rM)\big)$.
We omit the subscript when the ring $R$ is understood or irrelevant.  For a sequence $I = (i_1, \dotsc, i_k)$ in $E$, we abuse notation to write $e_I$ for $e_{\overline I}$ where $\overline I = (\operatorname{closure}_\rM(i_1), \dotsc, \operatorname{closure}_\rM(i_k))$.
We collect some facts about Orlik--Solomon algebras that we will need.

\begin{prop}\label{prop:partialiso}{\cite[Proposition 3.2]{Dimca}}
We have that $\oOS_R^\bullet(\rM) = \partial (\OS_R^\bullet(\rM))$, and the map $\partial: \OS_R^r(\rM) \longrightarrow \oOS_R^{r-1}(\rM)$ is an isomorphism of $R$-modules.
\end{prop}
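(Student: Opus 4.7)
My plan is to prove both statements via a single contracting-homotopy argument. Fix any atom $a_0 \in \A$ and define $h\colon \OS^\bullet(\rM) \to \OS^{\bullet+1}(\rM)$ by $h(x) := e_{a_0} \wedge x$. The key input I need is an identity of the form
\[
\partial h + h \partial = \operatorname{id}
\]
(possibly with degree-dependent signs to absorb into $h$), which on the ambient exterior algebra $\bigwedge^\bullet R^\A$ is the usual Leibniz rule for the interior product with the linear functional $R^\A \to R$ sending every $e_a$ to $1$ --- an operator that coincides with $\partial$ up to sign. Both $\partial$ and $h$ preserve the Orlik--Solomon ideal (the former because $\partial^2 = 0$, the latter because ideals are closed under multiplication), so the identity descends to $\OS^\bullet(\rM)$.

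The first assertion is then formal: the inclusion $\partial(\OS^\bullet) \subseteq \ker \partial = \oOS^\bullet$ is $\partial^2 = 0$, and conversely any $x \in \oOS^\bullet$ satisfies $x = \partial h(x) + h \partial x = \partial(e_{a_0} \wedge x)$, placing $x$ in $\partial(\OS^\bullet)$.

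For the isomorphism in top degree, surjectivity onto $\oOS^{r-1}$ is the $k = r - 1$ case of what was just established. For injectivity I would invoke the standard fact $\OS^k(\rM) = 0$ for $k > r$, a direct consequence of the no-broken-circuit basis (whose elements live in degrees $\leq r$). Because $h$ then vanishes on $\OS^r$, the homotopy identity for $x \in \OS^r$ collapses to $h(\partial x) = x$, so $\partial x = 0$ forces $x = 0$.

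I do not anticipate a genuine obstacle: this is a standard contracting-homotopy argument on a Koszul-type complex. The only points requiring attention are the sign bookkeeping in the Leibniz identity, the descent of both $\partial$ and $h$ from $\bigwedge^\bullet R^\A$ to $\OS^\bullet(\rM)$, and the classical vanishing $\OS^{r+1}(\rM) = 0$ used for top-degree injectivity --- all routine and logically prior to the present statement.
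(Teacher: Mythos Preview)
The paper does not supply its own proof of this proposition; it simply cites \cite[Proposition~3.2]{Dimca}. Your contracting-homotopy argument is the standard one and is essentially what appears in that reference (and in Orlik--Terao), so there is nothing to compare.

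Your outline is correct. A few remarks on the bookkeeping you flagged:
\begin{itemize}
\item With the paper's sign convention \eqref{eq:partialdefn}, the identity on degree $k$ is $\partial h - h\partial = (-1)^k \cdot \mathrm{id}$ rather than $\partial h + h\partial = \mathrm{id}$. As you anticipated, this is harmless: for $x\in\ker\partial$ of degree $k$ one gets $x = (-1)^k \partial(e_{a_0}\wedge x)\in\partial(\OS^{k+1})$, and for $x\in\OS^r$ one has $h(x)\in\OS^{r+1}=0$, whence $\partial x=0$ forces $(-1)^r x = -h(\partial x)=0$.
\item Your reason that $\partial$ preserves the Orlik--Solomon ideal is slightly misstated: it is not $\partial^2=0$ alone but the graded Leibniz rule $\partial(x\wedge \partial e_S) = (\partial x)\wedge \partial e_S \pm x\wedge \partial^2 e_S = (\partial x)\wedge \partial e_S$ that keeps you inside the ideal. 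This is routine and does not affect the argument.
\item The vanishing $\OS^{r+1}(\rM)=0$ via the NBC basis is indeed logically prior and safe to invoke.
\end{itemize}
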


For an atom $a \in \A$, we write $\rM \setminus a$ for the deletion of $a$, and $\rM/a$ for the contraction by $a$.
\begin{prop}\label{prop:OSexact}
For every atom $a\in \A$, we have a short exact sequence of $R$-modules
\[
0\to \OS^\bullet_R(\rM\setminus a) \overset{\iota_a}\to \OS^\bullet_R(\rM) \overset{\Res_a}\to \OS^{\bullet-1}_R(\rM/a) \to 0
\]
where $\iota_a(e_I) = e_I$, and
\[
\Res_a(e_I) := \begin{cases}
e_{I\setminus a} & \text{if $I = (i_1, \dots, i_{k-1}, a)$, and}\\
0 & \text{if $I \cap a = \emptyset$}.
\end{cases}
\]
These maps restrict to give the short exact sequence
\begin{equation}\label{eq:oRes}
0\to \oOS^\bullet_R(\rM\setminus a) \to \oOS^\bullet_R(\rM) \to \oOS^{\bullet-1}_R(\rM/a) \to 0.
\end{equation}
\end{prop}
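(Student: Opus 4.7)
The plan is to realize both maps at the exterior-algebra level and then descend to the OS quotients. Writing $\Lambda := \bigwedge R^\A$ and $\Lambda' := \bigwedge R^{\A\setminus\{a\}}$, the map $\iota_a$ lifts to the natural inclusion $\Lambda' \hookrightarrow \Lambda$, while $\Res_a$ agrees (up to a sign depending on degree) with the odd graded derivation $D:\Lambda\to \Lambda'$ of degree $-1$ determined by $D(e_a) = 1$ and $D(e_b) = 0$ for $b \neq a$. At this level, $0 \to \Lambda' \to \Lambda \to \Lambda' \to 0$ is split short exact via the decomposition $\Lambda = \Lambda' \oplus e_a \wedge \Lambda'$.

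The first task is to verify that both maps descend to the OS quotients. For $\iota_a$ this is immediate, since any $S \subseteq \A \setminus \{a\}$ has equal ranks in $\rM$ and in $\rM\setminus a$. For $\Res_a$, the essential fact is the anticommutation $D\partial = -\partial D$, which holds because both $D$ and $\partial$ are odd graded derivations (check on generators). For $S \subseteq \A$ dependent in $\rM$: if $a \notin S$, then $D(\partial e_S) = 0$; if $a \in S$, write $S = T \sqcup \{a\}$ and note $\mathrm{rank}_{\rM/a}(T) = \mathrm{rank}_{\rM}(S) - 1 < |T|$, so $T$ is dependent in $\rM/a$, and $D(\partial e_S) = -\partial(D e_S) = \pm \partial e_T$ lies in the OS ideal of $\rM/a$. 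To establish exactness of the first sequence, the composition $\Res_a \iota_a$ vanishes by construction, surjectivity of $\Res_a$ follows from the exterior-level section $y \mapsto e_a\wedge y$, and injectivity of $\iota_a$ together with exactness in the middle can be deduced from the no-broken-circuit basis theorem: for a linear order on $\A$ with $a$ maximal, NBC monomials of $\OS^\bullet(\rM)$ partition into those containing $a$, which become NBC monomials of $\OS^{\bullet-1}(\rM/a)$ after removing $a$, and those avoiding $a$, which form NBC monomials of $\OS^\bullet(\rM\setminus a)$. Alternatively, one may argue directly by decomposing the OS ideal under $\Lambda = \Lambda' \oplus e_a \wedge \Lambda'$.

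Finally, for the reduced sequence \eqref{eq:oRes}, I would apply the snake lemma to two copies of the first sequence stacked via the vertical maps $(\partial, \partial, -\partial)$; the sign is needed for the right square to commute, since $\Res_a\partial = -\partial\Res_a$. The snake lemma yields a long exact sequence involving $\ker\partial$ and $\mathrm{coker}\,\partial$; by \cref{prop:partialiso}, the kernels are precisely the $\oOS$ groups, and the connecting homomorphism vanishes because any $y \in \oOS^{\bullet-1}(\rM/a) = \partial(\OS^\bullet(\rM/a))$ lifts via surjective $\Res_a$ to some $\tilde y \in \OS^\bullet(\rM)$, and then $\partial \tilde y \in \oOS^{\bullet-1}(\rM)$ is already a boundary and hence vanishes in the cokernel. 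I expect the main obstacle to be exactness of the first sequence at $\OS^\bullet(\rM)$ together with injectivity of $\iota_a$, which require either the NBC machinery or a careful analysis of the OS ideal under $\Lambda = \Lambda' \oplus e_a \wedge \Lambda'$; once these are in hand, the reduced version follows by homological algebra.
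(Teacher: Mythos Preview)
Your approach is correct and takes a genuinely different route from the paper's. The paper does not argue at the exterior-algebra level at all; instead it introduces the \emph{affine Orlik--Solomon algebra} $\OS^\bullet(\rM,a)$, cites the isomorphism $\OS^\bullet(\rM,a)\cong\oOS^\bullet(\rM)$ from Dimca, and then invokes the deletion--contraction short exact sequence for affine OS algebras from Orlik--Terao. Both the non-reduced and reduced sequences are thus obtained essentially by citation. Your route---the derivation $D$, the NBC basis with $a$ maximal to get exactness, and then the snake lemma for the reduced version---is self-contained and avoids the affine OS formalism entirely. The paper's proof is shorter on the page but outsources the substance; yours actually carries it out, and the NBC argument in particular makes the rank additivity $\dim\OS^k(\rM)=\dim\OS^k(\rM\setminus a)+\dim\OS^{k-1}(\rM/a)$ transparent.

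One imprecision worth tightening in your snake-lemma step: you write that $\partial\tilde y\in\oOS^{\bullet-1}(\rM)$ ``is already a boundary and hence vanishes in the cokernel,'' but the cokernel in the snake lemma is that of $\partial$ on $\OS(\rM\setminus a)$, not on $\OS(\rM)$. Being a boundary in $\OS(\rM)$ is automatic and not what you need. The correct statement is that $\partial\tilde y$ lies in $\ker\Res_a=\iota_a(\OS(\rM\setminus a))$ and is $\partial$-closed there; by injectivity of $\iota_a$ together with \cref{prop:partialiso} applied to $\rM\setminus a$, it therefore lies in $\iota_a(\partial\,\OS(\rM\setminus a))$, which is exactly what makes the connecting homomorphism vanish. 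With that adjustment the argument goes through.
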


\begin{proof}
Fixing an element $q\in \A$ with $q\neq a$, the \emph{affine Orlik--Solomon algebra} $\OS^\bullet_R(\rM,q)$ is defined by
\begin{equation}\label{eq:affine}
\OS^\bullet_R(\rM,q) =  \frac{\bigwedge^\bullet R^{\A\setminus q}}{\langle \epsilon_S : q \subseteq \operatorname{closure}_\rM(S)\rangle + \langle \partial \epsilon_S : \text{$q\not\subseteq \operatorname{closure_\rM(S)}$ and $S$ is dependent}\rangle}
\end{equation}
where the $\epsilon_i$ denotes the standard basis vectors of $R^{\A\setminus q}$.
\cite[Theorem 3.4]{Dimca} states that there is an isomorphism $\OS^\bullet(\rM,q) \overset\sim\to \oOS^\bullet(\rM)$ given by $\epsilon_I \mapsto \partial e_{Iq}$.  
\cite[Theorem 3.65]{OrlikTerao} states the appropriate short exact sequence for the affine Orlik--Solomon algebras, where description of the middle two maps in the short exact sequence is identical to the description in the proposition but with $e_I$ replaced by $\epsilon_I$.\footnote{The cited statements in \cite{Dimca} and \cite{OrlikTerao} are established for hyperplane arrangements, but the proof is valid for any matroid.}
Both short exact sequences now follow, and the one for $\oOS^\bullet$ is the restriction of the one for $\OS^\bullet$ since $\Res_a(\partial e_{I',a,q}) = \partial e_{I',q}$ and $\Res_a(\partial e_{Iq}) = 0$ if $I \cap a = \emptyset$.
\end{proof}

\begin{rem}\label{rem:convention}
We caution that our conventions for $\partial$ and $\Res$ differs from those in \cite{Dimca} and \cite{OrlikTerao}.
In those references, $\partial e_S$ and $\Res_a$ are defined as $\partial e_S = \sum_{i = 1}^k (-1)^{i+1}e_{S\setminus s_i}$ and $\Res_a(e_{aI}) = e_I$, which differs from our definition by a sign.
Our choice of conventions here agrees with those of \cite{ABL} where positive geometries were introduced.
\end{rem}

We record the following consequence for future use.

\begin{lem}\label{lem:Resinj}
The map $\prod_{a \in \A} \Res_a: \oOS^d(\rM) \to \prod_{a\in \A} \oOS^{d-1}(\rM/a)$ is an injection for $d\geq 1$.
\end{lem}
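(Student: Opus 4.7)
The plan is to induct on $|\A|$, peeling off one atom at a time via the short exact sequence \eqref{eq:oRes}.

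Suppose $\omega \in \oOS^d(\rM)$ lies in the kernel of $\prod_a \Res_a$, so that $\Res_a(\omega) = 0$ for every atom $a \in \A$. Fix an atom $a \in \A$. By \eqref{eq:oRes} applied to $a$, the vanishing of $\Res_a(\omega)$ places $\omega$ in the image of the inclusion $\iota_a \colon \oOS^d(\rM \setminus a) \hookrightarrow \oOS^d(\rM)$, so I can write $\omega = \iota_a(\omega')$ for a unique $\omega' \in \oOS^d(\rM\setminus a)$.

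The next step is to show that for every other atom $b \neq a$, the element $\omega'$ satisfies $\Res_b(\omega') = 0$ in $\oOS^{d-1}((\rM \setminus a)/b)$.  This will follow from the commuting square
\[
\begin{array}{ccc}
\oOS^d(\rM \setminus a) & \xrightarrow{\iota_a} & \oOS^d(\rM) \\
\Res_b\,\downarrow & & \downarrow\,\Res_b \\
\oOS^{d-1}((\rM\setminus a)/b) & \xrightarrow{\iota_a} & \oOS^{d-1}(\rM/b),
\end{array}
\]
whose commutativity is a direct consequence of the formulas for $\iota_a$ and $\Res_b$ in Proposition~\ref{prop:OSexact}, together with the standard fact that deletion of $a$ and contraction of $b$ commute for the disjoint subsets $a, b \subseteq E$. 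Injectivity of the bottom $\iota_a$, combined with $\Res_b(\omega) = 0$, then yields $\Res_b(\omega') = 0$.

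Since deleting an atom creates no new rank-$1$ flats among the remaining elements, $\A(\rM \setminus a) = \A \setminus \{a\}$, and $\omega'$ therefore satisfies the hypothesis of the lemma for the matroid $\rM \setminus a$, which has one fewer atom.  Induction on $|\A|$ then gives $\omega' = 0$ and hence $\omega = 0$; the base case $|\A| = 0$ is vacuous since $\oOS^d = 0$ for $d \geq 1$ when the matroid has no atoms.  The main item requiring attention is the commutativity of the square above, but this is routine from the definitions, so the whole argument is essentially an iterated application of the short exact sequence \eqref{eq:oRes}.
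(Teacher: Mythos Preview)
Your proof is correct and follows essentially the same approach as the paper: induct on $|\A|$, use the exact sequence \eqref{eq:oRes} to write $\omega = \iota_a(\omega')$, use the commutativity $\Res_b \circ \iota_a = \iota_a \circ \Res_b$ together with injectivity of $\iota_a$ to conclude $\Res_b(\omega') = 0$ for all $b \neq a$, and apply the inductive hypothesis to $\rM \setminus a$. The only cosmetic difference is that the paper takes $|\A| = 1$ as the base case while you take $|\A| = 0$; both are trivially verified.
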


\begin{proof}
For distinct atoms $a\neq b\in \A$, the description of the maps in Proposition~\ref{prop:OSexact} implies that
\[
\begin{tikzcd}
\OS^\bullet(\rM\setminus a)  \ar[r, "\Res_b"] \ar[d,"\iota_a"'] &\OS^{\bullet-1}(\rM\setminus a / b) = \OS^{\bullet-1}(\rM / b \setminus a) \ar[d,"\iota_a"]\\
\OS^\bullet(\rM) \ar[r, "\Res_b"] & \OS^{\bullet-1}(\rM/b)
\end{tikzcd}
\]
commutes, i.e.\ we have $\Res_b \circ \iota_a = \iota_a \circ \Res_b$.  We now induct on $|\A|$ to prove the lemma.
The statement holds vacuously when $|\A| = 1$, since $\oOS^d(\rM) =  0$ for all $d\geq 1$ in that case.
Now, suppose $\omega \in \oOS^d(\rM)$ is in the kernel of the map $\prod_{a \in \A} \Res_a: \oOS^d(\rM) \to \prod_{a\in \A} \oOS^{d-1}(\rM/a)$.   For $a \in \A$, since $\Res_a(\omega) = 0$, by Proposition~\ref{prop:OSexact} we have $\omega = \iota_a(\omega')$ for some $\omega' \in  \oOS^\bullet(\rM \setminus a)$.
For every $b \in \A \setminus a$, which is nonempty when $|\A|>1$, we then have
\[
\iota_a(\Res_b(\omega')) = \Res_b(\iota_a(\omega')) = \Res_b(\omega) = 0,
\]
so that $\Res_b(\omega') = 0$ since $\iota_a$ is an injection.  By induction, we conclude $\omega' = 0$, and thus $\omega = 0$.
\end{proof}

To state the main theorem, we will use the notion of a \emph{triangulation} of an oriented matroid $\cM$, which is a collection of bases of $\cM$ satisfying certain conditions \cite{San02}.  The only properties of triangulations we need are collected in \cref{lem:triang}.
We fix one more notation as follows.
For a choice of a chirotope $\chi$ on $\cM$, an atom $a\in \A$, and an element $i\in a$ in $E$, the function
\[
(\chi/i)(B) := \chi(B,i) \quad\text{ for any ordered basis $B$ of $\rM/a$}
\]
is a chirotope on $\cM/a$.  When $a$ is acyclic in $\cM$, the chirotope $\chi/i$ is independent of the choice of $i\in a$, so we may write $\chi/a$ in that case.  We now state the main theorem.

\begin{thm}\label{thm:main}
For each pair $(\cM, \chi)$ of a loopless oriented matroid $\cM$ of rank $r$ and a chirotope $\chi$ on $\cM$, choose any triangulation $\triang$ of $\cM$ and define
\begin{equation}\label{eq:cantriang}
\bOmega(\cM, \chi) := \sum_{B\in \triang} \chi(B)\partial e_B.
\end{equation}
The assignment $(\cM, \chi) \mapsto \bOmega(\cM,\chi)$ is the unique way of assigning to each pair $(\cM, \chi)$ an element in $\oOS^{r-1}(\rM)$ satisfying the following properties:
\begin{enumerate}[label = (\arabic*)]
\item\label{main1} When the rank $r = 1$, if $\cM$ is not acyclic then $\bOmega(\cM,\chi) = 0$, and if $\cM$ is acyclic then $\bOmega(\cM,\chi) = (\chi/E)(\emptyset) \in \{\pm 1\}$ (or equivalently $\bOmega(\cM,\chi) = \chi(i)$ for any $i\in E$).
\item\label{main2} For every atom $a \in \A(\rM)$, we have
\[
\Res_a\bOmega(\cM,\chi) = \begin{cases}
\bOmega(\cM/a, \chi/a) & \text{if $a$ is acyclic in $\cM$}\\
0 &\text{otherwise}.
\end{cases}
\]
\end{enumerate}
We call $\bOmega(\cM,\chi)$ the \emph{canonical form} of the pair $(\cM,\chi)$.
\end{thm}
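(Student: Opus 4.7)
The plan is two-pronged: first establish uniqueness of any assignment satisfying (1) and (2), then show that the explicit formula \eqref{eq:cantriang} satisfies both. As a bonus, uniqueness will then imply that \eqref{eq:cantriang} is independent of the chosen triangulation $\triang$.

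\emph{Uniqueness.} We induct on the rank $r$. The base case $r = 1$ is immediate from (1), which pins down $\bOmega(\cM,\chi) \in \oOS^0(\rM)$ directly. For the inductive step with $r \geq 2$, suppose $\bOmega_1$ and $\bOmega_2$ are two assignments satisfying (1) and (2). For each atom $a \in \A$, the contraction $\cM/a$ has rank $r - 1$, so by (2) and the inductive hypothesis, $\Res_a \bOmega_1 = \Res_a \bOmega_2$ (both equal $\bOmega(\cM/a, \chi/a)$ or both equal $0$, depending on whether $a$ is acyclic in $\cM$). Since $r - 1 \geq 1$, \cref{lem:Resinj} then yields $\bOmega_1 = \bOmega_2$.

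\emph{Existence.} Each summand $\partial e_B$ in \eqref{eq:cantriang} lies in $\oOS^{r-1}(\rM) = \ker \partial$, because $\partial^2 = 0$ on $\OS^\bullet$ (a consequence of the definition together with \cref{prop:partialiso}); so the sum lies in the correct space regardless of which $\triang$ is chosen. We now verify (1) and (2) for the formula, relying on \cref{lem:triang} for the combinatorial properties of triangulations. For (1) at rank $1$, a triangulation admits a very direct description, and the signed sum $\sum_{B \in \triang} \chi(B) \partial e_B$ becomes a signed count in $R$ that evaluates to $\pm 1$ if $\cM$ is acyclic and to $0$ otherwise. For (2), fix an atom $a$. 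The key local computation is that $\Res_a(\partial e_B) = 0$ unless $B$ meets $a$, while for $B = (b_1, \dots, b_{r-1}, a_*)$ with $a_* \in a$, expanding $\partial e_B$ and collecting only the terms ending in $a_*$ gives $\Res_a(\partial e_B) = \pm\, \partial e_{B \setminus a_*}$ in $\oOS^{r-2}(\rM/a)$, with the sign matching the chirotope convention $(\chi/a_*)(B \setminus a_*) = \chi(B)$. Summing over $B \in \triang$ and invoking \cref{lem:triang}, the projected bases $\{B \setminus a_* : B \in \triang,\ a_* \in B \cap a\}$ form a triangulation of $\cM/a$ when $a$ is acyclic in $\cM$, producing $\Res_a \bOmega(\cM, \chi) = \bOmega(\cM/a, \chi/a)$; when $a$ is not acyclic, the projections pair up with cancelling signs and we obtain $0$.

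The main obstacle is the careful bookkeeping of signs: one must check that the sign arising from the residue calculation lines up exactly with the sign built into $\chi/a_*$, and that the triangulation machinery in \cref{lem:triang} delivers both the projection-to-triangulation statement (in the acyclic case) and the sign-cancellation statement (in the non-acyclic case) in the correct normalization. Once this local verification is in hand, both (1) and (2) hold for \eqref{eq:cantriang}, and the earlier uniqueness argument completes the proof.
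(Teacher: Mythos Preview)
Your overall approach matches the paper's: uniqueness via \cref{lem:Resinj}, existence by checking that the formula \eqref{eq:cantriang} satisfies the recursion, with the triangulation combinatorics supplied by \cref{lem:triang} and \cite[Theorem~2.4(e)]{San02}. However, the logical structure of your existence argument has a gap.

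You propose to verify (1) and (2) for the formula directly, and then conclude triangulation-independence ``as a bonus'' at the very end. But to even state that $\Res_a \bOmega(\cM,\chi) = \bOmega(\cM/a,\chi/a)$ holds, the right-hand side must be well-defined, i.e.\ independent of the triangulation chosen for $\cM/a$. The projection of $\triang$ to $\cM/a$ gives \emph{some} triangulation, not necessarily the one used to define $\bOmega(\cM/a,\chi/a)$. The paper resolves this by structuring the existence proof itself as an induction on $r$: assuming the full theorem at ranks $\leq r-1$, the formula at those ranks is already triangulation-independent, so property~(2) at rank $r$ becomes a well-posed assertion that one can then verify.

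This matters most in the non-acyclic case. Your claim that ``the projections pair up with cancelling signs'' is not a term-by-term cancellation. When $a$ is not acyclic, \cref{lem:triang} shows that exactly two elements $i_1,i_2 \in a$ contribute, and they give \emph{different} triangulations $\triang_{i_1}, \triang_{i_2}$ of $\cM/a$ with chirotopes $\chi/i_1 = -\chi/i_2$. The two contributions cancel only because the inductive hypothesis guarantees $\sum_{B'\in\triang_{i_1}}(\chi/i_1)(B')\partial e_{B'}$ and $\sum_{B'\in\triang_{i_2}}(\chi/i_2)(B')\partial e_{B'}$ compute the same canonical form of $\cM/a$ (up to sign), despite using different triangulations. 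Without the inductive independence, you cannot conclude that these sums are negatives of each other. The fix is simply to reorganize: run existence by induction on $r$, invoking the theorem at rank $r-1$ (hence triangulation-independence there, via the uniqueness you already proved) inside the verification of (2).
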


In particular, \cref{thm:main} implies that the expression $\sum_{B\in \triang} \chi(B)\partial e_B$, as an element in the reduced Orlik--Solomon algebra of $\rM$, is independent of the triangulation $\triang$ of $\cM$.
Note that by definition $\bOmega(\cM,-\chi) = -\bOmega(\cM,\chi)$.
To avoid nested inductions in the proof of the theorem, let us observe the following.

\begin{lem}\label{lem:acycliczero}
Suppose Theorem~\ref{thm:main} holds for all loopless oriented matroids of rank at most $s$.  Then, for $\cM$ a loopless oriented matroid of rank $r\leq s$ and $\chi$ a chirotope on $\cM$, the following holds:
\begin{quote}
If $\cM$ is not acyclic, then $\bOmega(\cM,\chi) = 0$, that is, $\sum_{B\in \triang} \chi(B)\partial e_B = 0$ for any triangulation $\triang$ of $\cM$.
\end{quote}
\end{lem}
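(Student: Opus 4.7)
The plan is to proceed by induction on the rank $r$. The base case $r = 1$ is immediate from property \ref{main1} of Theorem~\ref{thm:main}, which is part of what is being assumed: a non-acyclic rank-$1$ loopless oriented matroid has $\bOmega(\cM,\chi) = 0$.

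For the inductive step, assume the statement for all ranks $< r$, and let $\cM$ have rank $r \geq 2$ and be non-acyclic. Since $r - 1 \geq 1$, \cref{lem:Resinj} applies, so it suffices to check that $\Res_a \bOmega(\cM, \chi) = 0$ for every atom $a \in \A(\rM)$. By property \ref{main2} of Theorem~\ref{thm:main}, this residue vanishes when $a$ is not acyclic in $\cM$, and equals $\bOmega(\cM/a, \chi/a)$ when $a$ is acyclic in $\cM$. In the latter case $\cM/a$ has rank $r - 1 < r$, so the inductive hypothesis finishes the argument as soon as we know $\cM/a$ is not acyclic.

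The key combinatorial input is therefore the claim: if $\cM$ is not acyclic and $a$ is acyclic in $\cM$, then $\cM/a$ is not acyclic. I would argue this as follows. Pick a positive vector $V$ of $\cM$ (a nonzero sign vector in the vector set of $\cM$ with $V^- = \emptyset$), which exists because a non-acyclic oriented matroid has a positive circuit, hence a positive vector. Since the vectors of $\cM/a$ are precisely the restrictions $W|_{E \setminus a}$ of vectors $W$ of $\cM$, it is enough to show $V|_{E \setminus a} \neq 0$. Were $V$ supported on $a$, it would furnish a nonzero positive vector of $\cM|_a$, contradicting the hypothesis that $a$ is acyclic in $\cM$. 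Thus $V|_{E \setminus a}$ is a positive vector of $\cM/a$, showing $\cM/a$ is not acyclic, and the induction closes.

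The main obstacle is this combinatorial claim about contraction; once it is in place, the induction together with \cref{lem:Resinj} and Theorem~\ref{thm:main}\ref{main2} does the reduction almost mechanically. The realizable picture makes the claim transparent—a positive dependency among vectors descends to a nontrivial positive dependency after projecting out a subspace spanned by vectors that all point in one direction—but phrasing it purely in oriented-matroid terms requires the equivalence between positive circuits and positive vectors and the standard description of the vectors of a contraction.
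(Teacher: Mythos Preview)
Your proof is correct and follows essentially the same route as the paper's: both induct on the rank, invoke \cref{lem:Resinj} together with property \ref{main2} to reduce to the claim that $\cM/a$ is non-acyclic whenever $\cM$ is non-acyclic and $a$ is acyclic in $\cM$, and both prove that claim by observing that a positive circuit (equivalently, positive vector) of $\cM$ cannot be supported on $a$ and hence restricts to a positive vector of $\cM/a$.
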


\begin{proof}
We induct on the rank $r$, where the base case $r = 1$ is clear.  Suppose now $\cM$ has rank $1<r\leq s$ and is not acyclic.  We first note that if an atom $a\in \A$ is acyclic in $\cM$, then $\cM/a$ is not acyclic.
Indeed, in this case a positive circuit of $\cM$ cannot be contained in $a$, so its image in $\cM/a$ is a positive vector, which contains a positive circuit of $\cM/a$.  Thus, property \ref{main2} in Theorem~\ref{thm:main}, the induction hypothesis, and Lemma~\ref{lem:Resinj} together imply that $\bOmega(\cM,\chi) = 0$.
\end{proof}

\begin{proof}[Proof of Theorem~\ref{thm:main}]

We first show uniqueness by induction on the rank $r$.
The base case $r = 1$ holds by definition.  For $\cM$ of rank $r>1$, Lemma~\ref{lem:Resinj} implies that $\bOmega(\cM,\chi)$ is determined by property \ref{main2} and the induction hypothesis.

For the existence, note first that for a fixed triangulation $\triang$, the formula
\[
\bOmega(\cM,\chi) := \sum_{B\in \triang} \chi(B) \partial e_B
\]
is well-defined since for each basis $B$ in the triangulation $\triang$, the element $\chi(B)\partial e_B$ is independent of the ordering of $B$.
We show by induction on the rank $r$ that $\bOmega(\cM,\chi)$ satisfies the properties \ref{main1} and \ref{main2}.
The base case $r = 1$ follows from \cite[first part of Theorem 2.4(e)]{San02}, which states the following.  When $r = 1$, a triangulation of an acyclic $\cM$ consists of one basis, and a triangulation of a non-acyclic $\cM$ consists of two bases $i,j \subseteq E$ such that $\chi(i) = -\chi(j)$.
This verifies property \ref{main1}.

Now suppose $\cM$ has rank $r>1$.  For all $\cM'$ of rank at most $r-1$, the induction hypothesis implies that the formula $\sum_{B\in \triang'} \chi(B) \partial e_B$ is independent of the triangulation $\triang'$ of $\cM'$, and that $\bOmega_{\triang'}(\cM',\chi') = 0$ if $\cM'$ is not acyclic by Lemma~\ref{lem:acycliczero}.
\cite[Theorem 2.4.(e)]{San02} states that for any element $i\in E$, the collection $\triang_i := \{B\setminus i: B\in \triang \text{ and } i\in B\}$ is either empty or a triangulation of $\cM/a$, where $a\in \A$ is the atom containing $i$.
For $a\in \A$, we compute that 
\begin{equation}\label{eqn:res}\tag{$\dagger$}
\Res_a \Big(\sum_{B\in \triang} \chi(B)\partial e_B \Big) = \sum_{\substack{B\in \triang\\ B \cap a \neq \emptyset}} \chi(B) \partial e_{B\setminus a} = \sum_{\substack{i\in a\\ \triang_i \neq \emptyset}} \Big( \sum_{B' \in \triang_i} (\chi/i)(B') \cdot \partial e_{B'}\Big).
\end{equation}
It follows from Lemma~\ref{lem:triang} below that exactly one of the following three cases occur:
\begin{enumerate}[label = (\roman*)]
\item The collection $\triang_i$ is empty for every $i\in a$, and in this case $\cM/a$ is not acyclic.
\item The collection $\triang_i$ is nonempty for a unique $i\in a$, and in this case $a$ is acyclic in $\cM$.
\item The collection $\triang_i$ is nonempty for exactly two $i\in a$, and in this case the two such elements $i_1$ and $i_2$ form a positive circuit in $a$, so that $a$ is not acyclic.
\end{enumerate}
In the case (i), from \eqref{eqn:res} we have $\Res_a \bOmega(\cM,\chi) = 0$, which equals $\bOmega(\cM/a,\chi/a) = 0$ by the induction hypothesis since $\cM/a$ is not acyclic.
In the case (ii), from \eqref{eqn:res} we have $\Res_a \bOmega(\cM,\chi) = \sum_{B'\in \triang_i} (\chi/i)(B') \cdot \partial e_{B'}$, which equals $\bOmega(\cM/a,\chi/a)$ by the induction hypothesis that the expression is independent of the triangulation for $\cM/a$.
Lastly, in the case (iii), from \eqref{eqn:res} we have
\begin{align*}
\Res_a \bOmega(\cM,\chi) &= \sum_{B'\in \triang_{i_1}} (\chi/i_1)(B') \cdot \partial e_{B'} + \sum_{B'\in \triang_{i_2}} (\chi/i_2)(B') \cdot \partial e_{B'} \\
&= \bOmega(\cM/a, \chi/i_1) + \bOmega(\cM/a, \chi/i_2) \\
&=  \bOmega(\cM/a, \chi/i_1) + \bOmega(\cM/a, -\chi/i_1)  = 0,
\end{align*}
where the second equality follows from the induction hypothesis, and the third equality from that $\{i,j\}$ is a positive circuit.
In every case, we find that property \ref{main2} is satisfied.
\end{proof}

\begin{lem}\label{lem:triang}
Let $\triang$ be a triangulation of a loopless oriented matroid $\cM$, and $a\in \A(\rM)$ be an atom.
\begin{enumerate}
\item If two bases $B,B'\in \triang$ satisfy $B\cap a = \{i\}$ and $B'\cap a = \{j\}$ for some $i,j\in E$ where $\{i,j\}$ is acyclic in $\cM$, then $i = j$.
\item If $\cM/a$ is acyclic, then there exists $i\in a$ such that $B\cap a = \{i\}$ for some $B\in \triang$.
\item If $B\cap a \neq \emptyset$ for some $B\in \triang$ and $a$ is not acyclic, then there exists $B' \in \triang$ such that $B'\cap a \neq \emptyset$ and $(B\cap a) \cup (B' \cap a)$ is not acyclic in $\cM$.
\end{enumerate}
\end{lem}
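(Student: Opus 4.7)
The plan is to derive each part of the lemma from the defining axioms of triangulations in \cite{San02}, supplemented by the link property already invoked in the proof of \cref{thm:main}: for each $i\in E$, the set $\triang_i = \{B\setminus i : B\in \triang,\, i\in B\}$ is either empty or a triangulation of $\cM/a$ (Theorem~2.4(e) of \cite{San02}), where $a$ is the atom containing $i$.

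For part (2), the approach is to use that an acyclic oriented matroid has a positive tope. If $\cM/a$ is acyclic, lifting its positive tope through the contraction by $a$ produces a tope of $\cM$ whose interior meets $a$. The tope-coverage condition of a triangulation then forces some basis $B\in \triang$ to contain an element of $a$, giving the required $i\in a$ with $B\cap a = \{i\}$ (noting $|B\cap a|\leq 1$ since a basis has at most one element from any parallelism class).

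For part (1), the key observation is that two elements $i,j\in a$ with $\{i,j\}$ acyclic are parallel elements of $\rM$ whose chirotope signs match, i.e.\ they represent the same orientation along $a$. If distinct such $i$ and $j$ each appeared in bases $B, B'\in \triang$, then the simplices associated to $B$ and $B'$ would cover overlapping regions on the same side of the rank-one locus corresponding to $a$. The non-overlap (compatibility) axiom of triangulations then forces $i = j$. Equivalently, one can use that both $\triang_i$ and $\triang_j$ would be triangulations of $\cM/a$ inducing the same chirotope, and compare them via the link decomposition.

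For part (3), the atom $a$ being non-acyclic means $a$ contains two elements $i_1, i_2$ with $\{i_1, i_2\}$ a positive circuit. Given $B\in \triang$ with $B\cap a = \{i\}$, the strategy is to invoke the closed-pseudomanifold/boundary-matching property of triangulations: the simplex $B$ contributes a boundary facet $B\setminus i$ that lies in the link of $a$, and this boundary must be matched by another simplex on the opposite orientation of $a$. That matching simplex must arise from some $B'\in \triang$ with $B'\cap a = \{j\}$ and $\{i,j\}$ a positive circuit, yielding $(B\cap a)\cup(B'\cap a)$ non-acyclic. The main obstacle will be part (3), since it requires explicitly producing $B'$ and thereby carefully extracting the appropriate boundary-matching property from Santos's axioms. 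Parts (1) and (2) should follow more directly from the non-overlap condition and the tope-coverage property of triangulations, respectively.
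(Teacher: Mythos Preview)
Your proposal is a sketch rather than a proof, and several of the intended arguments have gaps that would need to be filled in ways that essentially converge on the paper's approach.

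For part (2): lifting the positive tope of $\cM/a$ does \emph{not} give a tope of $\cM$; it gives a covector that is zero on $a$. You would then need to argue that this covector, viewed as a point via a single-element extension, lies in the convex hull of some $B\in\triang$ and that the witnessing signed circuit has support $\{p,j\}$ with $j\in a$ --- which is precisely the paper's argument using Theorem~2.4(b) of \cite{San02}. The ``tope-coverage condition'' you invoke is not a standard named axiom; the actual property you need is the coverage by convex hulls, and the circuit analysis cannot be skipped.

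For part (1): your primary intuition (overlapping regions on the same side of $a$) is right and is exactly what the paper makes precise via the proper intersection property, Theorem~2.4(a) of \cite{San02}, applied to an extension $\cM\cup p$ by a parallel copy of $i$. Your proposed \emph{alternative}, however --- noting that $\triang_i$ and $\triang_j$ are both triangulations of $\cM/a$ with the same induced chirotope --- is not a contradiction by itself, since an oriented matroid can admit many triangulations. That route does not close without reintroducing the intersection property inside $\triang$.

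For part (3): your boundary-matching idea is plausible but vague. The paper instead contracts $\cM$ by $B\setminus a$ (iterating Theorem~2.4(e)) to obtain a triangulation of the rank-one restriction $\cM|_a$; in rank one the structure of triangulations is completely explicit, immediately producing an element of $a$ forming a positive circuit with $B\cap a$. This reduction to rank one is both shorter and avoids having to identify which pseudomanifold axiom in \cite{San02} furnishes the matching you need.
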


\begin{proof}
For (1), since $i$ and $j$ are in the same atom $a$ and $\{i,j\}$ is acyclic, the extension $\cM\cup p$ by a copy $p$ of $i$ satisfies $p\in \operatorname{conv}_{\cM\cup p}(B) \cap \operatorname{conv}_{\cM\cup p}(B')$.
The proper intersection property of triangulations \cite[Theorem 2.4(a)]{San02} then implies that $i\in \operatorname{conv}_{\cM\cup p}(B\cap B')$.  Hence, if $i\neq j$, then the span of $B\cap B'$ does not contain the atom $a$ of $i$ and $j$, which contradicts that $p\in \operatorname{conv}_{\cM\cup p}(B\cap B')$.

For (2), choose any $i\in a$ and let $\cM\cup p$ be the extension by a copy $p$ of $i$.
\cite[Theorem 2.4(b)]{San02} states that $p\in \operatorname{conv}_{\cM\cup p}(B)$ for some $B\in \triang$.  We claim that the signed circuit $C$ in $B\cup p$ witnessing $p\in \operatorname{conv}_{\cM\cup p}(B)$ contains only two elements $\{p,j\}$, in which case $j$ is necessarily in $a$, so that $B\cap a = \{j\}$.  Indeed, otherwise $C\setminus p$ is not a loop, and hence a positive vector in $\cM/a$, contradicting that $\cM/a$ is acyclic.

For (3), repeated application of \cite[Theorem 2.4(e)]{San02} implies that the contraction of $\cM$ by $B\setminus a$ yields a triangulation of the restriction $\cM|a$, which necessarily has a face containing an element that forms a positive circuit with $B\cap a$.
\end{proof}

\begin{rem}
We do not know how to prove that the formula \eqref{eq:cantriang} is independent of triangulation directly without using the recursive characterization of the canonical form.  We remark that the flip-graph of triangulations of an oriented matroid is not necessarily connected, even for realizable oriented matroids \cite{San00}.
\end{rem}

We now define the canonical form of a tope in an oriented matroid.

\begin{defn}\label{def:main} For a tope $P$ of $\cM$ and a chirotope $\chi$ of $\cM$, let $_P\cM$ be the reorientation of $\cM$ whose chirotope is $_P\chi(B) := (-1)^{|B \cap P^-|}\chi(B)$.  Define the \emph{canonical form of $P$} to be
\[
\bOmega(P,\chi) := \bOmega( _{P}\cM, _{P}\chi) \in \oOS^{r-1}(\rM).
\]
\end{defn}

Note that the tope $P$ becomes the tope $+^E$ in the acyclic reoriented matroid $_P\cM$.

\medskip
We now set up some notations to state the recursive property of the canonical form of a tope.
For an atom $a\in \mathfrak A$ of $\cM$, we say that $a$ is a \emph{facet} of a tope $P$, denoted $a\in \partial P$, if $P|_{E\setminus a}$ is a tope of $\cM/a$, or equivalently, if $P_a$ defined by
\begin{equation}\label{eq:Pa}
P_a(i) = \begin{cases} 
P(i) &\text{if $i\notin a$ and}\\
0 &\text{if $i\in a$}
\end{cases}
\end{equation}
is a covector of $\cM$.
When $a\in \partial P$, we write $P/a$ for $P|_{E\setminus a}$.  We also define $\chi/^P a$, a chirotope of $\cM/a$ by
\[
(\chi/^P a)(I) := P(i)\chi(I,i) \quad\text{for a basis $I$ of $\cM/a$ and any $i\in a$}.
\]
This is independent of the chosen $i\in a$ since $P(i) = P(i')$ and $\chi(I,i) = \chi(I,i')$ if $i,i'\in a$ are parallel, and $P(i) = -P(i')$ and $\chi(I,i) = - \chi(I,i')$ if $i,i'\in a$ are anti-parallel.
Note that $_{P/a}\big((_P\chi)/a\big)$, the reorientation by $P/a$ of the contraction by $a$ of the reorientation by $P$ of $\chi$ (where $(_P\chi)/a$ is well-defined since $a$ is acyclic in $_P\cM$), is equal to $\chi/^P a$.
Canonical forms $\bOmega(P,\chi)$ of topes have the following recursive characterizing property.

\begin{cor}\label{cor:topeform}
The canonical forms $\bOmega(P,\chi)$ satisfy the characterizing recursion:
\begin{enumerate}[label=(\roman*)]
\item If $\cM$ has rank $r=1$, then $\bOmega(P,\chi) = P(e)\chi(e) 1 \in \oOS^0(\rM)$ for any $e\in E$.
\item
If $\cM$ has rank $r > 1$ then for any $a \in \mathfrak A$ we have
$$
\Res_a \bOmega(P,\chi) = \begin{cases}  \bOmega(P/a, \chi/^P a) & \text{if $a \in \partial P$}, \\
0 & \text{if $a \notin \partial P$.}
\end{cases}
$$
\end{enumerate}
\end{cor}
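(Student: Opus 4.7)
The plan is to reduce the statement directly to Theorem~\ref{thm:main} applied to the acyclic reoriented pair $({}_P\cM, {}_P\chi)$, via the definitional identity $\bOmega(P,\chi) = \bOmega({}_P\cM, {}_P\chi)$ from Definition~\ref{def:main}.

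For part (i) in rank $r = 1$: because $P$ is a tope, ${}_P\cM$ is acyclic (with $P$ reoriented to $+^E$), so Theorem~\ref{thm:main}\ref{main1} gives $\bOmega({}_P\cM, {}_P\chi) = ({}_P\chi)(e)$ for any $e \in E$; unpacking the reorientation yields $({}_P\chi)(e) = (-1)^{|\{e\} \cap P^-|}\chi(e) = P(e)\chi(e)$, as required.

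For part (ii) in rank $r > 1$: the first observation is that every atom $a \in \A$ is acyclic in ${}_P\cM$, since a positive circuit of the restriction ${}_P\cM|_a$ would be a positive circuit of the acyclic matroid ${}_P\cM$. Thus Theorem~\ref{thm:main}\ref{main2} applies unconditionally, yielding
\[
\Res_a\bOmega(P,\chi) = \bOmega({}_P\cM/a,\,({}_P\chi)/a)
\]
for every $a \in \A$. The remaining work splits into determining when this residue vanishes versus when it equals $\bOmega(P/a, \chi/^P a)$.

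The crux is the equivalence: $a \in \partial P$ if and only if ${}_P\cM/a$ is acyclic. I would establish this by chaining: $a \in \partial P$ iff $P_a$ is a covector of $\cM$ (definition of a facet); iff $(+^E)_a$ is a covector of ${}_P\cM$ (reorientation bijects covectors); iff $+^{E \setminus a}$ is a covector of ${}_P\cM/a$ (covectors of a contraction by $a$ are the restrictions of covectors vanishing on $a$); iff ${}_P\cM/a$ is acyclic (the standard equivalence between acyclicity and $+^E$ being a tope). Granting this chain: if $a \notin \partial P$, then ${}_P\cM/a$ is non-acyclic and Lemma~\ref{lem:acycliczero} gives $\bOmega({}_P\cM/a,({}_P\chi)/a) = 0$. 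If $a \in \partial P$, I would use that reorientation commutes with contraction (multiplying a covector entry by $\pm 1$ preserves zeros) to write ${}_P\cM/a = {}_{P/a}(\cM/a)$; then applying ${}_{P/a}$ to both sides of the identity ${}_{P/a}(({}_P\chi)/a) = \chi/^P a$ recorded in the paper gives $({}_P\chi)/a = {}_{P/a}(\chi/^P a)$, and Definition~\ref{def:main} directly matches $\bOmega({}_P\cM/a,({}_P\chi)/a)$ with $\bOmega(P/a, \chi/^P a)$.

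The main obstacle is purely bookkeeping: verifying the four-step equivalence chain in the crux, and checking the commutation of reorientation with contraction at the levels of both the oriented matroid and the chirotope. Once these are clear, the corollary is a direct translation through the definitions, with no further induction needed beyond what is already packaged into Theorem~\ref{thm:main} and Lemma~\ref{lem:acycliczero}.
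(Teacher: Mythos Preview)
Your proposal is correct and follows essentially the same route as the paper: reduce via Definition~\ref{def:main} to the acyclic reorientation ${}_P\cM$, invoke Theorem~\ref{thm:main} and Lemma~\ref{lem:acycliczero}, and use the equivalence ``$a\in\partial P$ iff ${}_P\cM/a$ is acyclic'' together with the commutation of reorientation and contraction and the identity ${}_{P/a}(({}_P\chi)/a)=\chi/^Pa$. The paper's proof is a three-line sketch of exactly this argument; your four-step equivalence chain and the observation that every atom is automatically acyclic in ${}_P\cM$ simply make explicit what the paper leaves to the reader.
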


\begin{proof}
When $\cM$ is acyclic, for the tope $P = +^E$, we have $a\in \partial P$ if and only if $\cM/a$ is acyclic.
Since reorienting commutes with contracting for oriented matroids, and since $_{P/a}\big((_P\chi)/a\big) = \chi/^P a$, the desired corollary now follows from Theorem~\ref{thm:main} and Lemma~\ref{lem:acycliczero}.
\end{proof}

We now discuss a variant of $\bOmega(\cM,\chi)$ for the non-reduced Orlik--Solomon algebra.
Let us define $\tOmega(\cM,\chi) \in \OS^r(\rM)$ by replacing $\partial e_B$ by $e_B$ in \eqref{eq:cantriang}, and similarly define $\tOmega(P,\chi)$.

\begin{thm}
The canonical form $\tOmega(P,\chi) \in \OS^r(\rM)$ does not depend on choice of triangulation of ${}_P \cM$, and satisfies the characterizing recursion
\begin{enumerate}[label=(\roman*)]
\item If $\cM$ has rank $r=0$, then $\tOmega(P,\chi) = \chi(\emptyset) 1 \in \OS^0(\rM)$.
\item
If $\cM$ has rank $r > 0$ then for any $a \in \mathfrak A$ we have
$$
\Res_a \tOmega(P,\chi) = \begin{cases}  \tOmega(P/a, \chi/^P a) & \text{if $a \in \partial P$}, \\
0 & \text{if $a \notin \partial P$.}
\end{cases}
$$
\end{enumerate}
\end{thm}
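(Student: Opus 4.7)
The plan is twofold. First, leverage Proposition~\ref{prop:partialiso} and Theorem~\ref{thm:main} to establish triangulation-independence essentially for free. Second, mirror the case analysis in the proof of Theorem~\ref{thm:main}---now without the $\partial$---to verify the residue recursion.

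For triangulation-independence, the observation is that applying $\partial$ term by term to the defining sum for $\tOmega(\cM,\chi)$ yields
\[
\partial \tOmega(\cM,\chi) = \sum_{B\in \triang} \chi(B)\, \partial e_B = \bOmega(\cM,\chi).
\]
Since $\partial: \OS^r(\rM)\to \oOS^{r-1}(\rM)$ is an isomorphism by Proposition~\ref{prop:partialiso} and $\bOmega(\cM,\chi)$ is triangulation-independent by Theorem~\ref{thm:main}, the same must hold for $\tOmega(\cM,\chi)$. This transfers at once to $\tOmega(P,\chi) = \tOmega({}_P\cM, {}_P\chi)$.

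For the recursion, the rank-zero case is immediate since the unique triangulation is $\{\emptyset\}$. For the residue in higher rank, it suffices to treat $\tOmega(\cM,\chi)$ with $\cM$ acyclic and $P = +^E$, because reorientation commutes with contraction and ${}_{P/a}\big(({}_P\chi)/a\big) = \chi/^P a$, exactly as in the deduction of Corollary~\ref{cor:topeform}. If $B \in \triang$ contains $i \in a$ (necessarily unique, as $a$ is a rank-one flat), then ordering $B$ with $i$ last yields $\Res_a(\chi(B)\, e_B) = (\chi/i)(B\setminus i)\, e_{B\setminus i}$, whence
\[
\Res_a \tOmega(\cM,\chi) = \sum_{\substack{i \in a\\ \triang_i \neq \emptyset}} \sum_{B' \in \triang_i} (\chi/i)(B')\, e_{B'},
\]
with $\triang_i := \{B\setminus i : B\in \triang,\ i \in B\}$. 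Lemma~\ref{lem:triang} then produces the same trichotomy as in the proof of Theorem~\ref{thm:main}: either every $\triang_i$ is empty (so $\cM/a$ is not acyclic, $a \notin \partial P$, and the residue vanishes); or exactly one $\triang_i$ is a triangulation of $\cM/a$ (so the sum equals $\tOmega(\cM/a, \chi/a)$ by the already-proved triangulation-independence of $\tOmega$ on $\cM/a$); or two of them are indexed by a positive circuit $\{i_1, i_2\}$ in $a$, in which case the residue equals $\tOmega(\cM/a, \chi/i_1) + \tOmega(\cM/a, \chi/i_2) = 0$, since $\chi/i_2 = -\chi/i_1$ and the defining sum is linear in the chirotope.

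The temptation to deduce the recursion for $\tOmega$ directly from that of $\bOmega$ by applying $\partial^{-1}$ is worth resisting: a short check shows that $\partial$ and $\Res_a$ anticommute rather than commute on top degree, introducing a sign that the direct route above sidesteps cleanly. Beyond this sign subtlety I expect no genuine obstacle; the argument parallels that of Theorem~\ref{thm:main} closely.
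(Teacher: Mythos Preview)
Your argument is essentially correct and coincides with the paper's \emph{alternative} proof, which it only sketches: use $\partial\tOmega=\bOmega$ together with \cref{prop:partialiso} for triangulation-independence, then rerun the case analysis of \cref{thm:main} without the $\partial$. Your observation that $\Res_a$ and $\partial$ anticommute on top degree is exactly why the paper does not simply push the recursion through $\partial^{-1}$.

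There is one small gap. After reorienting so that $\cM$ is acyclic and $P=+^E$, case (iii) of the trichotomy is vacuous (every atom $a$ is acyclic in an acyclic $\cM$), so only (i) and (ii) occur. In your case (ii) you obtain $\Res_a\tOmega(\cM,\chi)=\tOmega(\cM/a,\chi/a)$, but you still need to match this against the recursion when $a\notin\partial P$, i.e.\ when $\cM/a$ is \emph{not} acyclic. This does happen (e.g.\ contract at an interior point of a simplex), and in that situation you must argue $\tOmega(\cM/a,\chi/a)=0$. This follows immediately from \cref{lem:acycliczero} (giving $\bOmega(\cM/a,\chi/a)=0$) and the isomorphism $\partial\colon\OS^{r-1}\to\oOS^{r-2}$ of \cref{prop:partialiso}, but you should say so. Relatedly, the word ``characterizing'' in the statement asks for uniqueness; this follows from the $\OS$-version of \cref{lem:Resinj}, whose proof goes through verbatim since the short exact sequence of \cref{prop:OSexact} is stated for $\OS$ as well.

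For comparison, the paper's \emph{primary} proof is different and tidier: it adjoins a coloop $\star$ to $\cM$, identifies $\oOS^\bullet(\rM\oplus\star)\cong\OS^\bullet(\rM)$ via $\partial e_{I,\star}\mapsto e_I$, and then simply applies \cref{cor:topeform} to $\cM\oplus\star$. This bypasses the repeated case analysis entirely, at the cost of checking that topes and triangulations transport along the coloop extension. Your route is more hands-on and self-contained; the paper's route is shorter once the coloop trick is in hand.
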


\begin{proof}
Consider the extension $\cM \oplus \star$ of $\cM$ by a coloop $\star$ with the chirotope $\chi'$ defined by $\chi'(\star,B) = \chi(B)$.
Since $\star$ is never in the closure of a set of other atoms, we find that the affine Orlik--Solomon algebra $\OS^\bullet(\rM\oplus \star)$ is equal to $\OS^\bullet(\rM)$ by \eqref{eq:affine}, so that we conclude $\oOS^\bullet(\rM\oplus \star) \simeq \OS^\bullet(\rM)$ via $\partial e_{I,\star} \mapsto e_I$.  The theorem now follows by applying \cref{cor:topeform} to $\cM \oplus \star$, since a tope $P$ of $\cM$ defines a tope $(P,+)$ of $\cM\oplus \star$ and a triangulation of $_P\cM$ lifts to that of $_P\cM \oplus \star$.

Alternatively, since $\partial \tOmega(P,\chi) = \bOmega(P,\chi)$, one may use \cref{prop:partialiso} and repeat the arguments for \cref{thm:main} and \cref{cor:topeform} to conclude the theorem.
\end{proof}

\section{Realizable case}\label{sec:real}

\subsection{Definition of positive geometry}\label{ssec:defposgeom}
Let us recall the definition of positive geometries, along with conventions for residues and orientations following \cite{ABL}.

\medskip
\def\Int{{\rm Int}}
Let $X$ be a smooth complex $d$-dimensional irreducible algebraic variety, $\omega$ a meromorphic $d$-form on $X$, and $H \subset X$ an (irreducible) hypersurface on $X$.\footnote{Positive geometries are defined more generally for normal varieties, but we will only need the case when the varieties involved are smooth.}  Assume that $\omega$ has at most simple poles on $H$.  Then, the \emph{residue} $\Res_H \omega$ is the $(d-1)$-form on $H$ defined as follows.  Let $f$ be a local coordinate such that $f$ vanishes to order one on $H$.  Write 
$$
\omega = \eta \wedge \frac{df}{f} + \eta' 
$$
for a $(d-1)$-form $\eta$ and a $d$-form $\eta'$, both without poles along $H$.  Then the restriction
\begin{equation}\label{eq:resdef}
\Res_H \omega:= \eta|_H
\end{equation}
is a well-defined meromorphic $(d-1)$-form on $H$, called the \emph{residue} of $\omega$, and does not depend on the choices of $f, \eta,\eta'$.

\medskip
We now further assume that $X$ is a variety defined over $\RR$.
We equip the real points $X(\RR)$ with the analytic topology.
Let $X_{\geq 0} \subset X(\RR)$ be a closed semialgebraic subset such that the interior $X_{>0} = \Int(X_{\geq 0})$ is an oriented real $d$-manifold, and the closure of $X_{>0}$ recovers $X_{\geq 0}$.
Let $\partial X_{\geq 0}$ denote the boundary $X_{\geq 0} \setminus X_{ > 0}$ and let $\partial X$ denote the Zariski closure of $\partial X_{\geq 0}$.  Let $C_1,C_2,\ldots,C_r$ be the codimension 1 irreducible components of $\partial X$.  
Let $C_{i, \geq 0}$ denote the analytic closures of the interior of $C_i \cap \partial X_{\geq 0}$ in $C_i(\RR)$.
The spaces $C_{1,\geq 0}, C_{2,\geq 0},\ldots, C_{r,\geq 0}$ are called the boundary components, or facets of $X_{\geq 0}$.
When the interior $C_{i,>0} = \Int(C_{i,\geq 0})$ is orientable, its orientation is inherited from that of $X_{>0}$ as in the following remark.

\begin{rem}\label{rem:orientboundary}
For an oriented real manifold $M$ with an orientable boundary $N$ of codimension 1, we orient $N$ such that a basis $(v_1, \dotsc, v_{d-1})$ of tangent vectors on $N$ is positively oriented if and only if $(v_1, \dotsc, v_{d-1}, u)$ is positively oriented on $M$ for any inner normal vector $u$ of the boundary $N$ in $M$.
\end{rem}

\begin{definition}\label{def:PG}
  We call $(X,X_{\geq 0})$ a \emph{positive geometry} if there exists a unique nonzero rational $d$-form $\Omega(X,X_{\geq 0})$, called the \emph{canonical form}, satisfying the recursive axioms:
  \begin{enumerate}
    \item If $d = 0$, then $X = X_{\geq 0}= \pt$ is a point and we define $\Omega(X,X_{\geq 0})=\pm 1$ depending on the orientation.
    \item If $d>0$, then we require that $\Omega(X,X_{\geq 0})$ has poles only along the boundary components $C_i$, these poles are simple, and for each $i =1,2,\ldots,r$, we have
    \begin{equation}\label{eq:PGdef}
    \Res_{C_i}\Omega(X,X_{\geq 0})=\Omega(C_i,C_{i,\geq0}).
    \end{equation}
  \end{enumerate}
\end{definition}

\begin{thm}\cite{ABL}
Any $d$-dimensional polytope $\overline P \subset \RR^d \subset \PP^d(\RR)$ is a positive geometry.
\end{thm}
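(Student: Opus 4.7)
The plan is to proceed by induction on $d$, constructing the canonical form explicitly by triangulation and then verifying uniqueness via a regularity argument. The base case $d=0$ is immediate from Definition~\ref{def:PG}(1). For the inductive step, I would first settle the case of a simplex $\triang \subset \PP^d(\RR)$ with vertices $v_0, \dotsc, v_d \in \RR^{d+1}$: there is a standard closed-form expression
\[
\Omega(\triang) = \frac{\langle v_0 v_1 \cdots v_d\rangle^d \, d^d x}{\prod_{i=0}^{d} \langle x, W_i\rangle}
\]
where $W_i$ is the linear form vanishing on the facet opposite to $v_i$, normalized so that $W_i(v_i) > 0$. A direct calculation shows that $\Omega(\triang)$ has simple poles exactly along the bounding hyperplanes, that the residue along the hyperplane $\{W_i = 0\}$ is the canonical form of the opposite facet simplex (with the induced orientation of Remark~\ref{rem:orientboundary}), and that $\Omega(\triang)$ is independent of the ordering of the vertices.

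For a general polytope $\overline P$, pick any simplicial triangulation $\overline P = \bigcup_j \triang_j$ that uses only vertices of $\overline P$, and define
\[
\Omega(\overline P, \overline P) := \sum_j \Omega(\triang_j).
\]
The crucial step is to show that this sum depends neither on the triangulation nor introduces spurious poles along the interior walls of the triangulation. I would handle this by grouping terms around each internal codimension-one face $F^\circ$ shared by two simplices $\triang_j, \triang_{j'}$: the residues $\Res_{F^\circ}\Omega(\triang_j)$ and $\Res_{F^\circ}\Omega(\triang_{j'})$ are canonical forms of the same simplex $F^\circ$ but with opposite induced orientations (because $F^\circ$ is an outward facet from one side and an inward facet from the other), so they cancel. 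Thus $\Omega(\overline P, \overline P)$ has poles only along the affine spans of the genuine facets of $\overline P$. A standard flip-connectivity argument for simplicial subdivisions of polytopes (or, alternatively, the same residue-cancellation argument applied pairwise) then shows that two different triangulations yield forms that differ by a sum of terms supported on internal walls, all of which cancel.

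To verify axiom (2) of Definition~\ref{def:PG}, fix a facet $F$ of $\overline P$ and note that the triangulation $\{\triang_j\}$ restricts to a triangulation of $F$ by the facets $\triang_j \cap F$ of those simplices $\triang_j$ that meet $F$ in codimension one. Taking residues along the hyperplane spanned by $F$ gives
\[
\Res_F \Omega(\overline P, \overline P) = \sum_{j : \triang_j \cap F \text{ is a facet}} \Omega(\triang_j \cap F),
\]
and by the inductive hypothesis applied to $F$ (which is itself a polytope of dimension $d-1$), the right-hand side equals $\Omega(F, F_{\geq 0})$, with the correct orientation by the convention of Remark~\ref{rem:orientboundary}.

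Finally, uniqueness is handled separately: if $\Omega$ and $\Omega'$ both satisfy axioms (1) and (2), their difference $\Omega - \Omega'$ has no poles along any facet hyperplane (residues agree by induction), so it extends to a regular rational $d$-form on all of $\PP^d$. Since $H^0(\PP^d, \omega_{\PP^d}) = 0$, we conclude $\Omega = \Omega'$. The main obstacle in this plan is the triangulation-independence step: convex polytopes have non-connected flip graphs in general, so one cannot reduce to local flips. I would circumvent this by never appealing to flips, and instead using the residue-cancellation argument to show directly that \emph{any} simplicial triangulation produces a form whose only poles lie on the facet hyperplanes, combined with the uniqueness argument above to conclude the two forms must coincide.
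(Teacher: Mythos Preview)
The paper does not give its own proof of this statement: it is simply quoted as a result of \cite{ABL}, and the sentence immediately following it says that the paper's \cref{thm:main} ``recovers and sharpens'' it. Your sketch is essentially the original \cite{ABL} argument (explicit simplex form, sum over a triangulation, residue cancellation on internal walls, and uniqueness via $H^0(\PP^d,\omega_{\PP^d})=0$), and it is correct as outlined; in particular your final paragraph correctly repairs the flip-connectivity issue by replacing it with the uniqueness argument, which is exactly how \cite{ABL} proceeds.

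It is still worth contrasting this with what the present paper actually does. The paper's route to $\Omega_{\overline P}$ is \cref{thm:main} together with \cref{thm:real}: one defines $\bOmega(P,\chi)$ purely inside the reduced Orlik--Solomon algebra $\oOS^{r-1}(\rM)$ via a triangulation of the \emph{oriented matroid}, proves the residue recursion combinatorially (using \cref{lem:triang} and \cref{lem:Resinj}), and only then pushes forward along the Brieskorn isomorphism \eqref{eq:Bri} to obtain a differential form. Two differences matter. First, the uniqueness mechanism is different: you use vanishing of global regular top-forms on $\PP^d$, whereas the paper uses the purely algebraic injectivity of $\prod_a \Res_a$ on $\oOS^\bullet$ (\cref{lem:Resinj}); the former gives uniqueness among all rational forms with the prescribed pole locus, while the latter only gives uniqueness inside $\oOS^\bullet$ but needs no geometry and therefore extends to non-realizable oriented matroids. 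Second, your triangulations are of the polytope $\overline P$, while the paper's are triangulations of $\cM$ in the sense of Santos; these coincide with triangulations of the polar dual $\overline P^\circ$ rather than of $\overline P$ itself (see \cref{rem:Fil}), so the two formulas \eqref{eq:cantriang} and your $\sum_j \Omega(\triang_j)$ are genuinely different expansions of the same form.
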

We denote the canonical form of a $d$-dimensional projective polytope $\overline P$ by $\Omega_{\overline P}$.
In the next subsection, we explain how Theorem~\ref{thm:main} recovers and sharpens the above theorem.

\subsection{Hyperplane arrangements}
Let $V$ be a $r$-dimensional real vector space, and let $\mathscr L = \{\ell_i \in V^\vee: i \in E\}$ be a set of linear functionals indexed by a finite set $E$.
We assume that $\mathscr L$ spans $V^\vee$
Let $\cM$ be the oriented matroid whose set of covectors is $\{(\operatorname{sign}(\ell_i(x)))_{i\in E} \in \{+,0,-\}^E \mid x\in V\}$.
Writing $H_i$ for the hyperplane $\{\ell_i = 0\} \subset V$, let $\cA = \{H_i \mid i\in E\}$ be the essential central hyperplane arrangement of $\mathscr L$, and denote by $\bA = \{\PP H_i \mid i \in E\}$ the essential projective hyperplane arrangement in $\PP V$.
We let $U  := V \setminus \bigcup \cA$ and $\overline U :=\PP V \setminus \bigcup \bA $ denote the respective arrangement complements.
To each tope $P$ of $\cM$, we associate the subset
\[
\mathring\sigma_P = \{v\in V : \operatorname{sign}(\ell_i(v)) = P(i)\},
\]
which is a chamber of $U(\RR)$, and whose closure $\sigma_P$ is a polyhedral cone.
In this way, we identify chambers of $U(\RR)$ (resp.\ $\overline U(\RR)$) with the topes of $\cM$ (resp.\ equivalences classes of topes where $P \sim -P$).

We assume that $\{\ell_i,\ell_j\}$ is linearly independent for any $i\neq j\in E$.
That is, the oriented matroid $\cM$ is simple of rank $r$, and the arrangement $\bA$ is essential.
Under the essential hypothesis, for a tope $P$ of $\cM$, the closure of the corresponding chamber in $\overline U(\RR)$ is a full-dimensional projective polytope, denoted $\overline P$, and thus has a canonical form $\Omega_{\overline P}$ once the interior of $\overline P$ is oriented.
We record our conventions for orientations in the following remark.

\begin{rem}\label{rem:orientP}
A choice of a chirotope $\chi$ for $\cM$ orients $V^\vee$ in the obvious way, which orients $V$ such that a basis is positively oriented if and only if its dual basis is.
The projective space $\PP V$ is not orientable if $r\geq 3$ and odd, but given an orientation of $V$ and an additional linear functional $q\in V^\vee$, we orient $\PP V \setminus \{q = 0\}$ as follows.
We orient the subset $\{v\in V : q(v) \geq 1\}$ as a full-dimensional submanifold of $V$ with boundary.  Then, we identify $\PP V\setminus \{q = 0\}$ with $\{v\in V : q(v) = 1\}$, and consider it the boundary of the subset $\{v\in V : q(v) \geq 1\}$, which induces an orientation on $\{v\in V : q(v) = 1\}$ via Remark~\ref{rem:orientboundary}.  Note that under this convention, changing $q$ by a positive multiple yields the same orientation, while the same set $\PP V \setminus \{-q = 0\}$ has $(-1)^r$ times the orientation of $\PP V \setminus \{q = 0\}$.  Lastly, given a chamber in $U(\RR)$ corresponding to a tope $P$, its image in $\overline U(\RR)$, which is the interior of the projective polytope $\overline P$, is oriented by choosing $q\in V^\vee$ to be any element in the interior of the inner dual cone $\sigma_P^\vee$ of the cone $\sigma_P$.
\end{rem}

Given a tope $P$ of $\cM$ and a choice of a chirotope $\chi$ on $\cM$, the projective polytope $\overline P$ in $\PP V$ is oriented as in the remark above, and let $\Omega_{\overline P}$ be the resulting canonical form.  Note that while both the tope $P$ and its negative $-P$ define the same projective polytope $\overline P = \overline{-P}$, we have $\Omega_{\overline P} = (-1)^r\Omega_{\overline{-P}}$.

To explain how $\Omega_{\overline P}$ relates to $\bOmega(P,\chi)$ of Definition~\ref{def:main}, we prepare with the following well-known result.
Let $X$ (resp.\ $\overline X$) be a log smooth compactification of the complexification $U(\CC)$ (resp.\ $\overline U(\CC)$) with a normal crossing boundary divisor $D = X \setminus U$ (resp. $\overline D = \overline X \setminus U$), and let $\Omega^\bullet_{\log}(X,D)$ denote the ring of global algebraic forms on $X$ with logarithmic singularities along $D$.

\begin{thm} \label{thm:OS}
With $\CC$-coefficients (i.e.\ $R = \CC$), the maps 
\begin{equation}\label{eq:Bri}
e_i \longmapsto \dlog \ell_i = \frac{d \ell_i}{\ell_i} \qquad \text{and} \qquad \partial e_{ij} \longmapsto \dlog \ell_i - \dlog \ell_j = \dlog(\ell_i/\ell_j)
\end{equation}
respectively induce isomorphisms 
\[
\OS^\bullet_\CC(\rM) \cong 
H^\bullet(U(\CC),\CC) \cong \Omega^\bullet_{\log}(X,D)\qquad \text{and} \qquad \oOS^\bullet_\CC(\rM) \cong   
H^\bullet(\overline U(\CC),\CC) \cong \Omega^\bullet_{\log}(\overline X,\overline D).
\]
\end{thm}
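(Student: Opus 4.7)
The plan is to reduce everything to the classical Brieskorn--Orlik--Solomon theorem together with Deligne's log de Rham computation, and then extract the reduced/projective statement by passing to $\CC^*$-invariants under the tautological scaling on $V(\CC)$.

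For the affine case, the first isomorphism $\OS^\bullet_\CC(\rM) \cong H^\bullet(U(\CC),\CC)$ is the Orlik--Solomon theorem (see, e.g., \cite[Chapters 3 and 5]{OrlikTerao}): one checks via Brieskorn's lemma that the forms $\dlog \ell_i$ satisfy the boundary relations imposed by dependent sets of atoms, so that $e_i \mapsto \dlog \ell_i$ is a well-defined map of graded algebras, and then matches graded dimensions using the M\"obius function of $\rM$. The second isomorphism $H^\bullet(U(\CC),\CC) \cong \Omega^\bullet_{\log}(X,D)$ is Deligne's theorem: for any log smooth compactification $X$ of $U$ with simple normal crossing boundary $D$, the inclusion of the logarithmic de Rham complex into the full meromorphic de Rham complex is a quasi-isomorphism, and since each $\dlog \ell_i$ already lifts to a global log form on $X$ (the hyperplanes close up to components of $D$), the Brieskorn image lies inside $\Omega^\bullet_{\log}(X,D)$, which is thus isomorphic to the whole algebra.

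For the projective case, I would use that $U(\CC) \to \overline U(\CC)$ is a principal $\CC^*$-bundle. Leray--Hirsch (or a direct K\"unneth argument after trivializing the bundle via any $\{\ell_{i_0}=1\}$) identifies $H^\bullet(\overline U(\CC),\CC)$ with the subalgebra of $H^\bullet(U(\CC),\CC)$ annihilated by contraction with the Euler vector field $\xi = \sum_j x_j \partial_{x_j}$. A direct calculation gives $\iota_\xi \dlog \ell_i = 1$, so contraction with $\xi$ on a monomial $\dlog \ell_{i_1} \wedge \dotsm \wedge \dlog \ell_{i_k}$ produces an alternating sum which, under the Orlik--Solomon isomorphism, agrees up to the overall sign fixed by the conventions of \eqref{eq:partialdefn} with $\partial$. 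Hence $\ker(\iota_\xi) = \ker(\partial) = \oOS^\bullet_\CC(\rM)$ by \cref{prop:partialiso}. Since $\dlog(\ell_i/\ell_j)$ is manifestly $\CC^*$-invariant and therefore descends to $\overline U(\CC)$, the map \eqref{eq:Bri} factors through $\Omega^\bullet_{\log}(\overline X,\overline D)$, yielding $\oOS^\bullet_\CC(\rM) \cong H^\bullet(\overline U(\CC),\CC) \cong \Omega^\bullet_{\log}(\overline X,\overline D)$, where the second isomorphism is Deligne's theorem applied to the smooth compactification $\overline X$ of $\overline U$.

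The main obstacle is conceptual rather than technical: the substance of the theorem---Brieskorn's lemma, the Orlik--Solomon dimension count, and Deligne's log de Rham quasi-isomorphism---is classical and can be cited. The only verification specific to the present formulation is the observation that contraction with the Euler vector field on log forms corresponds, under \eqref{eq:Bri}, to the algebraic boundary operator $\partial$, which is a short unwinding of the definition \eqref{eq:partialdefn}.
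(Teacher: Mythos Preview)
Your reduction of the projective isomorphism $\oOS^\bullet \cong H^\bullet(\overline U)$ to the affine one via the $\CC^*$-bundle $U \to \overline U$ and the identification of $\partial$ with contraction $\iota_\xi$ by the Euler field is correct and is a genuinely different route from the paper, which simply cites \cite{Bri,OS80} for both isomorphisms simultaneously.  Your argument has the virtue of making the passage from $\OS$ to $\oOS$ explicit on the level of forms, rather than relying on a second black-box citation.

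There is, however, a real gap in your justification of $H^\bullet(U) \cong \Omega^\bullet_{\log}(X,D)$.  Deligne's theorem is a statement about \emph{hypercohomology}: the logarithmic de Rham complex, viewed as a complex of sheaves on $X$, has hypercohomology $H^\bullet(U)$.  The object $\Omega^\bullet_{\log}(X,D)$ in the theorem is the ring of \emph{global sections} $H^0(X,\Omega^\bullet_X(\log D))$, and nothing in Deligne's quasi-isomorphism tells you that global sections already compute the hypercohomology.  Concretely, your argument establishes that $\OS^\bullet \hookrightarrow \Omega^\bullet_{\log}(X,D)$ is injective and $\Omega^\bullet_{\log}(X,D) \to H^\bullet(U)$ is surjective (since their composite is the Brieskorn isomorphism), but it does not rule out that $\Omega^\bullet_{\log}(X,D)$ is strictly larger than the Brieskorn image, nor even that every global log form is closed.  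This is precisely where the paper invokes purity of the mixed Hodge structure \cite{Sha93,Kim94}: purity forces $F^p H^p(U) = H^p(U)$, and combined with degeneration of the Hodge spectral sequence this identifies $H^p(U)$ with $H^0(X,\Omega^p_X(\log D))$.  You need either this input or an equivalent vanishing statement for the higher sheaf cohomology of $\Omega^p_X(\log D)$; Deligne's quasi-isomorphism alone is not enough.
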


\begin{proof}
\cite{Bri,OS80} state the isomorphisms between the Orlik--Solomon algebras and the cohomology rings.  The isomorphisms to the ring of algebraic forms with log singularities follows from the fact that mixed Hodge structure of $H^k(U(\CC))$ (resp.\ $H^k(\overline U(\CC))$) is pure of Hodge type $(k,k)$ \cite{Sha93, Kim94}.
\end{proof}

\begin{thm}\label{thm:real}
Let $\chi$ be a chirotope on $\cM$, and $P$ a tope of $\cM$.
Under the assignment \eqref{eq:Bri} of Theorem~\ref{thm:OS}, the canonical form $\bOmega(P,\chi)$ is sent to the canonical form $\Omega_{\overline P}$ of the chamber closure $\overline P$ with the orientation as given in Remark~\ref{rem:orientP}.
\end{thm}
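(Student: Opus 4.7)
The plan is to prove Theorem~\ref{thm:real} by induction on the rank $r$, showing that the image $\omega_P$ of $\bOmega(P,\chi)$ under the isomorphism $\oOS^{r-1}_\CC(\rM) \cong \Omega^{r-1}_{\log}(\overline X, \overline D)$ of Theorem~\ref{thm:OS} satisfies the same residue recursion as $\Omega_{\overline P}$, and then invoking a uniqueness argument on $\PP V$. The key compatibility is that this isomorphism intertwines the algebraic residue $\Res_a$ of Proposition~\ref{prop:OSexact} with the geometric residue $\Res_{\PP H_a}$ defined in~\eqref{eq:resdef}: both are derivations of degree $-1$ that agree on generators, since $d\ell_a/\ell_a$ has geometric residue $1$ along $\PP H_a$, matching $\Res_a(e_a)=1$. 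Hence they agree on all of $\oOS^\bullet_\CC(\rM)$.

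With this compatibility, Corollary~\ref{cor:topeform} combined with the inductive hypothesis yields the following. For each atom $a \in \partial P$, we have $\Res_{\PP H_a}\omega_P = \omega_{P/a}$, which by induction equals $\Omega_{\overline{P/a}}$ and matches $\Res_{\PP H_a}\Omega_{\overline P}$ by the positive-geometry axiom~\eqref{eq:PGdef}. For each atom $a \notin \partial P$, the residue vanishes, so $\omega_P$ extends across $\PP H_a$ with no pole---matching $\Omega_{\overline P}$, which has no pole along $\PP H_a$ either, since that hyperplane is not a boundary component of $\overline P$. Consequently, $\omega_P - \Omega_{\overline P}$ is a rational $(r-1)$-form on $\PP V \cong \PP^{r-1}$ whose residues along every hyperplane of $\bA$ vanish, so it extends to a regular global $(r-1)$-form on $\PP^{r-1}$ and therefore vanishes, since $H^0(\PP^{r-1}, \Omega^{r-1}_{\PP^{r-1}}) = 0$.

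The main obstacle is orientation bookkeeping, which requires two sign checks. First, in the base case $r=1$, where $\PP V$ is a single point and $\bOmega(P,\chi) = P(e)\chi(e)\cdot 1$, I must verify that the sign $P(e)\chi(e)$ equals the orientation of the point $\overline P$ prescribed by Remark~\ref{rem:orientP}. Unwinding that remark---orient $V$ via $\chi$, choose $q$ in the interior of $\sigma_P^\vee$, and realize $\PP V$ as the boundary of $\{v \in V : q(v) \geq 1\}$ via Remark~\ref{rem:orientboundary}---reduces this to direct sign-chasing in one dimension. Second, in the inductive step I must confirm that the orientation of $\overline{P/a} \subset \PP H_a$ induced as a facet of $\overline P$ via Remark~\ref{rem:orientboundary} agrees with the orientation of $\overline{P/a}$ coming from the contracted chirotope $\chi/^P a$ via Remark~\ref{rem:orientP}. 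Unwinding the definition $(\chi/^P a)(I) = P(i)\chi(I,i)$ for any $i \in a$, this reduces to the following linear-algebra fact: completing a $\chi/^P a$-positive ordered basis of $H_a$ by any vector $v$ with $P(i)\ell_i(v) > 0$ yields a $\chi$-positive ordered basis of $V$, and such a $v$ points from $H_a$ into the chamber $\sigma_P$, which is precisely the inward-normal convention of Remark~\ref{rem:orientboundary}. This sign compatibility is the only non-formal content of the proof.
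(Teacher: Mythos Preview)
Your proposal is correct and follows essentially the same approach as the paper: both argue that the image of $\bOmega(P,\chi)$ under the Brieskorn map satisfies the positive-geometry recursion of Definition~\ref{def:PG}, by invoking Corollary~\ref{cor:topeform}. The paper's proof is a two-sentence sketch that leaves the residue compatibility, the orientation bookkeeping, and the uniqueness step implicit; you have unpacked each of these. One small quibble: neither the algebraic $\Res_a$ nor the geometric $\Res_{\PP H_a}$ is literally a graded derivation, so your phrase ``both are derivations of degree $-1$ that agree on generators'' is not quite right, but the compatibility you assert is correct and is verified by the direct computation you indicate. Your uniqueness argument via $H^0(\PP^{r-1},\Omega^{r-1}_{\PP^{r-1}})=0$ is a mild variant of simply citing the uniqueness clause in Definition~\ref{def:PG}; both rest on the same underlying fact.
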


\begin{proof}
Note first that the facets of the polytope $\overline P$ correspond to the atoms $i$ of the matroid $\rM$ such that $i \in \partial P$ as defined above Corollary~\ref{cor:topeform}.  Now, the statement for $\bOmega(P,\chi)$ follows from Corollary~\ref{cor:topeform}, since the recursive property stated in the corollary matches the recursive property of $\Omega_{\overline P}$ in Definition~\ref{def:PG} of a positive geometry.
\end{proof}

\begin{rem}
For essential real hyperplane arrangements, Yoshinaga \cite{Yoshinaga} defines a \emph{chamber basis} for the Orlik--Solomon algebra $\oOS^\bullet_{\CC}(\rM)$.  We show in \cref{cor:Yos} below that Yoshinaga's basis agrees with that coming from canonical forms.
\end{rem}

\begin{rem}\label{rem:Fil}
Via \cref{thm:real}, the formula \eqref{eq:cantriang} gives a new formula for the canonical form $\Omega_{\overline P}$ of a projective polytope $\overline P$ in the following way.
For any triangulation $\{T_1, \dotsc, T_m\}$ of the polar dual polytope $\overline{P}^\circ$ of $\overline{P}$, one has that $\Omega_{\overline P} = \sum_{i=1}^m \pm \Omega_{\overline T_i'}$ with appropriate signs, where the $\overline T_i'$ are simplices constructed as modified polar duals of the $T_i$.  For details, we point to an illustrated example in Section~\ref{ssec:rk3eg}.

This formula can be used to recover the result of Filliman \cite{Fil92} that expressed the volume of a polytope $P$ in terms of volumes of simplices from a triangulation of the polar dual $P^\circ$.  We remark that the proof given in \cite{Fil92} does not generalize easily to oriented matroids; it involves analyzing how a certain quantity changes as the linear functional $q$ (which defines a realizable extension of the hyperplane arrangement) varies in $V^\vee$.  Analogous analysis for an arbitrary oriented matroid $\cM$ would take place in the extension space $\mathcal E(\cM)$, whose structure is known to be intricate. 
\end{rem}

\section{Basis for the Orlik--Solomon algebra}

We now produce a basis for $\oOS^\bullet(\rM)$ using canonical forms of topes of successive truncations of $\mathcal M$.
This depends on a choice of a sequence of general extensions.
In this section, the term ``hyperplane'' refers to a flat of corank 1 in a matroid, in contrast to its usage in the previous section for realizable matroids.

\medskip
Let $\cM \cup q$ be an oriented matroid extension.  For every tope $P$ of $\cM$, we have that either $(P,+)$ or $(P,-)$ or both is a tope of $\cM \cup q$.
The extension $\cM \cup q$ is called \emph{general} if no hyperplane of $\rM \cup q$ is of the form $H\cup q$ for $H$ a hyperplane in $\rM$.  General extensions exist by \cite[Chapter 7]{BLVSWZ99}.  

Let $\L$ denote the poset of signed covectors of $\cM$.  We say that a covector $X \in \L$ is \emph{contained in} a tope $P \in \T$ if $P \leq X$ in $\L$.

\begin{defn}
For an element $0\in E$, a tope $P$ of $\cM$ is a \emph{bounded tope} of the affine oriented matroid $(\cM,0)$ if every nontrivial covector $X$ of $\cM$ contained in $P$ satisfies $X(0) = +$; see \cite[Definition 4.5.1]{BLVSWZ99}.
For an extension $\tM = \cM \cup q$ of $\cM$, we say that a  tope $P$ of $\cM$ is \emph{bounded with respect to $q$} if $(P,+)$ is a bounded tope of the affine oriented matroid $(\cM\cup q, q)$.  Denote by $\T^0(\cM)$ and $\T^q(\cM)$ the set of topes bounded with respect to $0$ and with respect to $q$.
\end{defn}

Technically, the notation $\T^?$ is overloaded --- $\T^0$ is with respect to the element $0$ already in the ground set, whereas as $\T^q$ references the extended element $q$, but we trust that this will not cause confusion.

\begin{thm}\label{thm:basis}
Fix a chirotope $\chi$ on $\cM$, and a general extension $\cM \cup q$.  Then, we have that
\[
\{\bOmega(P,\chi) : P \in \T^q(\cM)\} \quad\text{is an $R$-basis of}\quad \oOS_R^{r-1}(\rM)
\]
for any commutative ring $R$.
\end{thm}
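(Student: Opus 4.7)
The plan is to induct on $|E|$, applying the short exact sequence of \cref{prop:OSexact}
\[
0\to \oOS^{r-1}_R(\rM \setminus a) \xrightarrow{\iota_a} \oOS^{r-1}_R(\rM) \xrightarrow{\Res_a} \oOS^{r-2}_R(\rM/a) \to 0
\]
for a chosen non-coloop atom $a \in \A$. The base case $|E| = r$ is the Boolean matroid, where $\oOS^{r-1}_R(\rM)$ is free of rank one with generator $\partial e_E$, and any general extension $q$ admits exactly one bounded tope; the claim then follows from the formula defining $\bOmega(P,\chi)$. When $|E|>r$, a non-coloop atom exists, and the SES has $\cM\setminus a$ of rank $r$ and $\cM/a$ of rank $r-1$, each on a strictly smaller ground set.

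Partition $\T^q(\cM) = \T^q_a(\cM) \sqcup \T^q_{\bar a}(\cM)$ according to whether $a \in \partial P$. The combinatorial core of the proof is to establish bijections
\[
\T^q_a(\cM) \xrightarrow{\;P \,\mapsto\, P/a\;} \T^{q}(\cM/a) \qquad\text{and}\qquad \T^q_{\bar a}(\cM) \xrightarrow{\;P\,\mapsto\, P|_{E\setminus a}\;} \T^{q}(\cM\setminus a),
\]
where the right-hand sides use the induced extensions of $\cM/a$ and $\cM \setminus a$ by $q$. The required checks are that boundedness with respect to $q$ transfers under restriction to $E\setminus a$ via lifting covectors of the minor to covectors of $\cM$, and that the generality hypothesis on the extension $\cM \cup q$ descends to generality of $(\cM/a)\cup q$ and $(\cM\setminus a)\cup q$, which is direct for contraction and requires a short argument for deletion.

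Next one matches the canonical forms with the SES. For $P\in \T^q_a(\cM)$, \cref{cor:topeform} directly gives $\Res_a\bOmega(P,\chi) = \bOmega(P/a,\chi/^P a)$, identifying the residue with the canonical form of the image tope in $\cM/a$. For $P\in \T^q_{\bar a}(\cM)$, \cref{cor:topeform} gives $\Res_a\bOmega(P,\chi) = 0$, so $\bOmega(P,\chi)$ lies in the image of $\iota_a$; using the commutativity $\iota_a\circ\Res_b = \Res_b\circ\iota_a$ from the proof of \cref{lem:Resinj} together with its injectivity, one verifies by induction on $|E|$ that $\bOmega(P,\chi) = \iota_a\bigl(\bOmega(P|_{E\setminus a},\chi|_{\rM\setminus a})\bigr)$, the canonical form of the image tope in $\cM\setminus a$.

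With these compatibilities in hand, the inductive hypothesis applied to $\cM\setminus a$ and $\cM/a$ yields $R$-bases on both ends of the SES, which lift to the claimed basis of $\oOS^{r-1}_R(\rM)$. The principal obstacle is the second step: verifying the two boundedness bijections requires a careful covector-level translation of the defining condition ``every nontrivial covector of $\cM \cup q$ below $(P,+)$ takes value $+$ at $q$'' under deletion and contraction, and a companion verification that generality of $q$ is preserved under both operations.
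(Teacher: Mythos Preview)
Your deletion--contraction strategy differs from the paper's argument, which proves spanning and linear independence separately, each by induction on the rank rather than on $|E|$. For spanning, \cref{prop:spans} establishes the identity $(-1)^{|C_{B\cup q}^-|-1}\chi(B)\,\partial e_B = \sum_{P\in\T^q_B}\bOmega(P,\chi)$ for every basis $B$ by comparing residues at each atom. For independence, \cref{prop:indep} uses connectivity of the tope graph: given any nontrivial relation, one locates a tope $P$ in the support whose neighbor across some facet $a$ is not in the support, and then $\Res_a$ together with the inductive hypothesis isolates the coefficient of $\bOmega(P,\chi)$.

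The gap in your approach is that the two bijections you posit are false in general. Take $\cM = U_{2,3}$, realized by three distinct lines through the origin in $\RR^2$, with $q$ a fourth general line. The two bounded topes are adjacent sectors; if $a$ is the atom separating them, then $\T^q_a(\cM)$ consists of both bounded topes while $|\T^q(\cM/a)|=1$, so $P\mapsto P/a$ is two-to-one. Correspondingly $\T^q_{\bar a}(\cM)=\emptyset$ but $|\T^q(\cM\setminus a)|=1$. The underlying phenomenon is that $Q\in\T^q(\cM\setminus a)$ if and only if \emph{every} tope of $\cM$ restricting to $Q$ is bounded; when both lifts $P,P'$ are bounded, the pair contributes once to $\T^q(\cM\setminus a)$ (via $\bOmega(P,\chi)+\bOmega(P',\chi)\in\operatorname{im}\iota_a$) and once to $\T^q(\cM/a)$ (via $\Res_a\bOmega(P,\chi)=-\Res_a\bOmega(P',\chi)$), which your partition does not account for. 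The approach can likely be repaired by tracking such pairs explicitly, but this is substantive additional work rather than a routine check, and one would also need the nontrivial fact that every element of $\T^q(\cM/a)$ is the facet of at least one bounded tope of $\cM$.
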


By \cref{prop:partialiso}, we have the following corollary.
\begin{cor}\label{cor:basis}
Fix $\chi$ on $\cM$, and a general extension $\cM \cup q$.  Then, we have that
\[
\{\tOmega(P,\chi) : P \in \T^q(\cM)\} \quad\text{is an $R$-basis of}\quad \OS_R^{r}(\rM)
\]
for any commutative ring $R$.
\end{cor}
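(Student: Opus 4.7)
The plan is to deduce this corollary directly from Theorem~\ref{thm:basis} using the $R$-module isomorphism $\partial \colon \OS^r_R(\rM) \xrightarrow{\sim} \oOS^{r-1}_R(\rM)$ provided by Proposition~\ref{prop:partialiso}. Since applying an $R$-module isomorphism to a basis of a free module produces a basis of the isomorphic module, the whole proof reduces to verifying the identity $\partial \tOmega(P,\chi) = \bOmega(P,\chi)$ for each tope $P$.

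To verify this identity, I would simply unpack the definitions. Fixing any triangulation $\triang$ of the reoriented matroid ${}_P\cM$, one has by the definition in \eqref{eq:cantriang} and its $\tOmega$ variant
\[
\tOmega(P,\chi) = \sum_{B \in \triang} {}_P\chi(B)\, e_B \in \OS^r_R(\rM), \qquad \bOmega(P,\chi) = \sum_{B \in \triang} {}_P\chi(B)\, \partial e_B \in \oOS^{r-1}_R(\rM),
\]
so applying $\partial$ termwise to $\tOmega(P,\chi)$ yields $\bOmega(P,\chi)$. This identity is already recorded in the alternative proof of the theorem preceding the corollary.

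Combining the two ingredients, Theorem~\ref{thm:basis} supplies an $R$-basis $\{\bOmega(P,\chi) : P \in \T^q(\cM)\}$ of $\oOS^{r-1}_R(\rM)$, and $\partial^{-1}$ carries it to $\{\tOmega(P,\chi) : P \in \T^q(\cM)\}$, which is therefore an $R$-basis of $\OS^r_R(\rM)$. No further argument is needed. The only point to note, since the statement is asserted for an arbitrary commutative ring $R$, is that Proposition~\ref{prop:partialiso} is stated in the same generality; there is no genuine obstacle, and in particular no separate arguments over $\ZZ$ or over fields are required.
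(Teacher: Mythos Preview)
Your proof is correct and matches the paper's approach exactly: the paper simply cites \cref{prop:partialiso} to deduce the corollary from \cref{thm:basis}, and you have spelled out precisely the details behind that one-line justification, including the identity $\partial \tOmega(P,\chi) = \bOmega(P,\chi)$ that the paper records just above.
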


We prove \cref{thm:basis} in \cref{prop:spans} and \cref{prop:indep} below.  We prepare with some notations.
For a basis $B$ of $\cM$, which is a basis in $\cM\cup q$, denote the fundamental signed circuit by $C_{B\cup q}$ where $C_{B\cup q}(q) = -$.
When $\cM$ is Boolean, for a fixed general extension $\cM\cup q$, note that $C_{B\cup q}|_B$ is the unique tope of $\cM$ bounded with respect to $q$, and $\bOmega(C_{B\cup q}|_B, \chi) = (-1)^{|C_{B\cup q}^-|-1}\chi(B)\partial e_B$.
Let us also denote for a basis $B$ of $\cM$
\[
\T_{B}^q(\cM) := \{P \text{ a tope of $\cM$} : P|_B = C_{B\cup q}|_B\}.
\]
Note that we have $P\in \T_B^q(\cM) \implies P \in \T^q(\cM)$ because every covector of $\cM\cup q$ is orthogonal to $C_{B\cup q}$.
We simply write $\T_B^q$ when the oriented matroid $\cM$ is understood.

\begin{prop}\label{prop:spans}
For any basis $B$ of $\cM$, we have an identity 
\[
(-1)^{|C_{B\cup q}^-|-1}\chi(B)\partial e_B = \sum_{P \in \T_B^q} \bOmega(P,\chi)
\]
of elements in $\oOS^{r-1}(\rM)$.
In particular, the set $\{\bOmega(P,\chi) : P \in \T^q(\cM)\}$ spans $\oOS^{r-1}(\rM)$.
\end{prop}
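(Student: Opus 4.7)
My plan is to prove the displayed identity first and then deduce the spanning claim. The spanning assertion is immediate from the identity: by \cref{prop:partialiso}, the elements $\{\partial e_B : B \text{ a basis of } \rM\}$ span $\oOS^{r-1}(\rM)$, and the identity writes each $\chi(B)\partial e_B$ as a signed sum of canonical forms $\bOmega(P,\chi)$ with $P \in \T_B^q \subseteq \T^q(\cM)$.

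For the identity, I will induct on the rank $r$. The base case $r = 1$ is a direct unravelling of \cref{thm:main}\ref{main1}: both sides lie in $\oOS^0(\rM) \cong R$, the condition $P \in \T_B^q$ pins down the sign $P(b) = C_{B \cup q}(b)$ for the unique $b \in B$, and the canonical form of the resulting reoriented $\cM$ is computed explicitly. For the inductive step I invoke \cref{lem:Resinj}: it suffices to show that $\Res_a$ applied to both sides yields equal elements of $\oOS^{r-2}(\rM/a)$ for every atom $a \in \A$. On the left, after reordering $B = (b_1,\ldots,b_r)$ so that the unique element $b_j \in B \cap a$ (if any) appears last, a direct computation from \eqref{eq:partialdefn} gives
\[
\Res_a \partial e_B = \begin{cases} (-1)^{r-j+1}\, \partial e_{B \setminus b_j} & \text{if } B \cap a = \{b_j\},\\ 0 & \text{if } B \cap a = \emptyset. \end{cases}
\]
On the right, \cref{cor:topeform} gives $\Res_a \sum_{P \in \T_B^q} \bOmega(P,\chi) = \sum_{P \in \T_B^q,\; a \in \partial P} \bOmega(P/a, \chi/^P a)$.

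The main geometric ingredient is a pairing among topes. The topes $P$ of $\cM$ with $a \in \partial P$ and $P/a$ equal to a given tope $Q$ of $\cM/a$ come in pairs $\{P, P'\}$ differing by a simultaneous sign-flip on $a$, and satisfy $\chi/^{P} a = -\chi/^{P'} a$ (since $\chi/^P a = P(i)(\chi/i)$ for any $i \in a$), whence $\bOmega(P/a, \chi/^P a) + \bOmega(P'/a, \chi/^{P'} a) = 0$. When $B \cap a = \emptyset$, the two members of each pair agree on $B$, so both or neither lie in $\T_B^q$, producing total cancellation and matching the vanishing LHS. When $B \cap a = \{b\}$, exactly one member of each pair lies in $\T_B^q$—the one with $P(b) = C_{B \cup q}(b)$. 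Writing $\chi/^P a = P(b)(\chi/b)$, using that $B \setminus b$ is a basis of $\cM/a$ with $\cM/a \cup q$ again a general extension, and identifying $\{P/a : P \in \T_B^q,\; a \in \partial P\}$ with $\T_{B \setminus b}^q(\cM/a)$ (which follows because $C_{B \cup q}(\cM)$ and $C_{(B\setminus b) \cup q}(\cM/a)$ agree on $B \setminus b$), the surviving sum becomes
\[
C_{B \cup q}(b) \sum_{Q \in \T_{B \setminus b}^q(\cM/a)} \bOmega(Q, \chi/b),
\]
to which I apply the inductive hypothesis.

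I expect the main obstacle to be the sign bookkeeping that reconciles the two sides after invoking the inductive hypothesis: concretely, verifying the equality
\[
(-1)^{|C_{B \cup q}^-|-1}\, \chi(B)\, (-1)^{r-j+1} = C_{B \cup q}(b) \cdot (-1)^{|C_{(B \setminus b) \cup q}^-(\cM/a)|-1}\, (\chi/b)(B \setminus b)
\]
as coefficients of $\partial e_{B \setminus b}$. The key inputs are the reordering identity $(\chi/b)(B \setminus b) = \chi(B \setminus b, b) = (-1)^{r-j}\chi(B)$ together with a careful analysis of how the support and signs of the fundamental circuit $C_{B \cup q}$ transform under contraction by $a$ (in particular, whether $b \in \mathrm{supp}(C_{B \cup q})$ and with what sign).
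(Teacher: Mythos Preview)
Your proposal is correct and follows essentially the same route as the paper's proof: both induct on rank, handle the base case $r=1$ directly, and for the inductive step apply $\Res_a$ to both sides and invoke \cref{lem:Resinj}, splitting into the cases $a\cap B=\emptyset$ (pairwise cancellation via the sign-flip $P\mapsto P_a\circ(-P)$) and $a\cap B=\{b\}$ (identifying the surviving sum with $\T_{B\setminus b}^q(\cM/a)$ and applying the inductive hypothesis). Your writeup is in fact slightly more explicit about the reordering sign $(-1)^{r-j+1}$ than the paper, which absorbs it by tacitly placing $b$ last; note also that since $q$ is a general extension, $b$ is always in the support of $C_{B\cup q}$, so the case distinction you flag at the end does not arise.
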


\begin{rem}\label{rem:simplex}
In the realizable case, the form  $(-1)^{|C_{B\cup q}^-|-1}\chi(B)\partial e_B$ has the following meaning.
Let $\cM$ be realized by a set of linear functionals $\{\ell_i \in V^\vee : i\in E\}$ as in the previous section.  For a basis $B$ of $\cM$, consider the arrangement of hyperplanes $\{\ell_i = 0\}_{i\in B}$, which divides $V\simeq \RR^r$ into $2^r$ chambers.  For $q\in V^\vee$ in general position, exactly one of the $2^r$ chambers is contained in $\{q >0\}$.  That is, exactly one of the $2^{r-1}$ chambers in $\PP^{r-1}$ is bounded with respect to $q$.  The simplex in $\PP^{r-1}$ which is the closure of this chamber has $(-1)^{|C_{B\cup q}^-|-1}\chi(B)\partial e_B$ as its canonical form.\end{rem}

\begin{proof}[Proof of \cref{prop:spans}]
We induct on the rank of $\cM$.  When $r = 1$, for any fixed $e\in E$, the left-hand-side (LHS) is $(-1)^{|C_{e\cup q}^-|-1}\chi(e)\cdot 1$, whereas the right-hand-side (RHS) is $P(e)\chi(e)$ where $P$ is the tope with $P(e) = C_{e\cup q}|_e = (-1)^{|C_{e\cup q}^-|-1}$.  For the inductive step with $r >1$, we show that taking $\operatorname{Res}_a$ on both sides gives an equality for all $a\in \mathfrak A(\rM)$, which yields the desired equality by Lemma~\ref{lem:Resinj}.
For a fixed $a\in \mathfrak A(\rM)$, the residue $\operatorname{Res}_a$ of RHS is
\[
\Res_a  \sum_{P \in \T_B^q} \bOmega(P,\chi) = \sum_{\substack{P \in \T_B^q\\ \partial P \ni a}} \bOmega(P/a, \chi/^P a).
\]
We now break into two cases:
\begin{enumerate}[label = (\roman*)]
\item If $a\cap B = \emptyset$, then $\operatorname{Res}_a$ of LHS is zero.
For a tope $P$, if $P_a$ is a covector then $P_a \circ -P$ (i.e.\ the same as $P$ except the $a$ part has flipped sign) is also, and moreover if $P \in \T_B^q$ then $P_a \circ -P \in \T_B^q$ also if $a\cap B = \emptyset$.  Since $- \chi/^P a = \chi/_{(P_a \circ -P)} a$, we find that every summation $\sum_{\substack{P \in \T_B^q\\ \partial P \ni a}}\bOmega(P/a, \chi/^P a)$ in the above is zero by pairwise cancellations.
\item If $a \cap B = \{i\}$, then $\operatorname{Res}_a$ of LHS is
$(-1)^{|C_{B\cup q}^-|-1}(\chi/i)(B\setminus i) \partial e_{B\setminus i}$.
Note that $C_{(B\setminus i)\cup q}$, the fundamental signed circuit of $B\setminus i$ in $\cM\cup q /a = \cM/a \cup q$, is $C_{B\cup q}|_{(B\setminus i) \cup q}$.
Hence, by \eqref{eq:Pa}, we have
\[
\{P/a : P\in \T_B^q(\cM) \text{ and } a\in \partial P\} = \T_{B\setminus i}^q(\cM/a).
\]
and moreover $P(i) (-1)^{|C_{B\cup q}^-|} = (-1)^{|C_{(B\setminus i) \cup q}^-|}$ since $P|_B= C_{B\cup q}|_B$.
Since $\chi/^Pa = P(i) (\chi/i)$ by definition, we thus compute $\sum_{\substack{P \in \T_B^q\\ \partial P \ni a}} \bOmega(P/a, \chi/^P a) = C_{B\cup q}(i) \cdot \sum_{P' \in \T_{B\setminus i}^q(\cM/a)} \bOmega(P',\chi/i)$, and the desired equality follows from the induction hypothesis. \qedhere
\end{enumerate}
\end{proof}

\begin{prop}\label{prop:indep}
Fix $\chi$ on $\cM$, and a general extension $\cM \cup q$.  Then
$\{\bOmega(P,\chi) : P \in \T^q(\cM)\}$ is $R$-linearly independent in $\oOS^{r-1}(\rM)$.
\end{prop}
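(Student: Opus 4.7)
I would prove \cref{prop:indep} by induction on the pair $(r, |\A(\rM)|)$ in lexicographic order, using the residue short exact sequence~\eqref{eq:oRes}. When $r = 1$ or $\rM$ is Boolean ($|\A| = r$), both $\oOS^{r-1}_R(\rM)$ and $\T^q(\cM)$ reduce to a single element, and the claim is immediate from the description of $\bOmega$ in those cases.

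For the inductive step, assume $r \geq 2$ and $|\A(\rM)| > r$. Pick an atom $a \in \A(\rM)$ such that $\rM \setminus a$ still has rank $r$, and split
\[
\T^q(\cM) = \T^q_+(a) \sqcup \T^q_-(a), \qquad \T^q_\pm(a) = \{P \in \T^q(\cM) : a \in \partial P \text{ resp.\ } a \notin \partial P\}.
\]
Consider a putative relation $\sum_{P \in \T^q(\cM)} c_P \bOmega(P,\chi) = 0$. The proof has three steps.

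\emph{Step 1 (contraction via $\Res_a$).} Applying $\Res_a$ and using \cref{cor:topeform}, the $\T^q_-(a)$ contributions vanish, leaving
\[
\sum_{P \in \T^q_+(a)} c_P \bOmega(P/a, \chi/^P a) = 0 \in \oOS^{r-2}(\rM/a).
\]
The contraction map $\phi\colon \T^q_+(a) \to \T^q(\cM/a)$, $P \mapsto P/a$, is at most $2$-to-$1$; for a size-two fibre $\{P, P'\}$, the identity $\chi/^{P'} a = -\chi/^P a$ produces $\bOmega(P'/a, \chi/^{P'} a) = -\bOmega(P/a, \chi/^P a)$. Grouping by fibres and invoking the inductive hypothesis on $\cM/a$ (rank $r-1$) yields $c_P = 0$ on singleton fibres and $c_P = c_{P'}$ on each pair.

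\emph{Step 2 (merging across $a$).} After Step 1, the surviving relation
\[
\sum_{\text{pairs}(P,P')} c_P\bigl(\bOmega(P,\chi) + \bOmega(P',\chi)\bigr) + \sum_{P \in \T^q_-(a)} c_P \bOmega(P,\chi) = 0
\]
has every term in $\iota_a(\oOS^{r-1}(\rM \setminus a))$. The key identification is the \emph{merging identity}
\[
\bOmega(P,\chi) + \bOmega(P',\chi) = \iota_a \bOmega(\tilde P, \chi), \qquad \tilde P := P|_{E \setminus a} = P'|_{E \setminus a},
\]
together with $\bOmega(P,\chi) = \iota_a \bOmega(P|_{E \setminus a}, \chi)$ for $P \in \T^q_-(a)$. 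Both are proved by verifying that residues at every atom $b \neq a$ agree, using the commutation $\Res_b \iota_a = \iota_a \Res_b$ from \cref{lem:Resinj} combined with an inner induction on rank via the recursion of \cref{cor:topeform}. Boundedness of $\tilde P$ in $\cM \setminus a$ with respect to $q$ follows by lifting any nontrivial covector $Y \leq (\tilde P, +)$ from $\cM \setminus a \cup q$ to a covector of $\cM \cup q$ dominated by $(P, +)$ or $(P', +)$, and then invoking boundedness of one of those.

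\emph{Step 3 (conclusion via $\cM \setminus a$).} Applying $\iota_a^{-1}$, the surviving relation becomes
\[
\sum_{\text{pairs}} c_P \bOmega(\tilde P, \chi) + \sum_{P \in \T^q_-(a)} c_P \bOmega(P|_{E \setminus a}, \chi) = 0 \in \oOS^{r-1}(\rM \setminus a),
\]
a linear combination of canonical forms of pairwise distinct elements of $\T^q(\cM \setminus a)$. The inductive hypothesis on $\cM \setminus a$ (same rank, strictly fewer atoms) forces all remaining coefficients to vanish, completing the induction.

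\emph{Main obstacle.} The crux is the merging identity in Step 2, which is the oriented-matroid analog of the classical geometric fact that the canonical form of a bounded polytope equals the sum of the canonical forms of the two sub-polytopes produced by bisecting along an interior hyperplane. Making this precise combinatorially, along with the accompanying boundedness statement for $\tilde P$, requires the nested residue induction sketched above and leans on the generality hypothesis of the extension $\cM \cup q$ to ensure that the restriction of $q$ to $\cM \setminus a$ remains sufficiently generic for the inductive hypothesis to apply.
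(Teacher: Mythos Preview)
Your deletion--contraction strategy is viable, and with the merging identity and the preservation of generality under deletion filled in it would give a complete proof. However, it is a genuinely different route from the one taken in the paper, and considerably more elaborate.

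The paper's argument inducts on rank only and avoids the deletion step altogether. The key observation is that the support $\{P : c_P \neq 0\}$, being a nonempty subset of $\T^q(\cM)$ and hence a proper subset of \emph{all} topes, must (by connectivity of the tope graph) contain some $P$ adjacent across a facet $a$ to a tope $P' = P_a \circ (-P)$ that lies \emph{outside} the support. Applying $\Res_a$ then produces a relation in $\oOS^{r-2}(\rM/a)$ in which the coefficient of $\bOmega(P/a,\chi/^P a)$ is exactly $c_P$ (the only other contributor to that term would be $P'$, whose coefficient is zero). Since $P/a \in \T^q(\cM/a)$ and $(\cM\cup q)/a$ is again a general extension, the rank-$(r-1)$ hypothesis forces $c_P = 0$, a contradiction. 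No merging identity, no deletion, no second induction parameter.

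In your scheme, Step~1 already contains this argument in the guise of the ``singleton fibre'' case; what the connectivity trick buys is that one can \emph{always} reduce to that case, so Steps~2 and~3 become unnecessary. The payoff of your longer route is that the merging identity $\bOmega(P,\chi)+\bOmega(P',\chi) = \iota_a\bOmega(\tilde P,\chi)$ is an interesting statement in its own right---it is the oriented-matroid incarnation of additivity of canonical forms under subdivision---and proving it would yield a reusable lemma. But for the purposes of establishing \cref{prop:indep} alone, the connectivity argument is both shorter and avoids the two auxiliary verifications (the merging identity and the fact that $(\cM\cup q)\setminus a$ remains a general extension of $\cM\setminus a$) that you correctly flag as the main obstacles.
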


\begin{proof}
We induct on the rank of $\cM$, where the $r = 1$ case is easily verified.
For $r>1$, suppose that $\sum_P c_P \bOmega(P,\chi) = 0$ for some collection of coefficients $c_P \in R$.
Suppose for a contradiction that the support set $\{P : c_P \neq 0\}$ is nonempty.  
Because the tope graph of $\cM$ is connected \cite[Proposition 4.2.3 and Lemma 4.2.2]{BLVSWZ99} and the support set is a proper subset of all topes, there exists $P$ in the support set and $a\in \mathfrak A(\rM)$ such that $a\in \partial P$ but $P/a \circ -P$ is not in the support set.  Taking the residue $\operatorname{Res}_a \sum_P c_P \bOmega(P,\chi)$, the $\bOmega(P/a,\chi/a)$ term has coefficient exactly $c_P$, which is zero by the induction hypothesis, contradicting $c_P \neq 0$.
\end{proof}

Let $\rM \cup q$ be a general extension of $\rM$.  We call the contraction $\rM^{(1)} := (\rM \cup q)/q$ a \emph{general truncation} of $\rM$.  Repeating this operation, we have general truncations $\rM^{(1)}, \rM^{(2)}, \ldots$ of ranks $r-1,r-2,\ldots$.  Note that the ground sets of $\rM$ and $\rM^{(k)}$ are identical.

\begin{prop} \cite[Exercise 3.5]{Dimca} \label{prop:trunc}
For any general truncation $\rM^{(k)}$ of $\rM$, the identification of ground sets $E(\rM) \cong E(\rM^{(k)})$ induces an $R$-module injection $\oOS^\bullet(\rM^{(k)}) \hookrightarrow \oOS^\bullet(\rM)$ which is an isomorphism in degrees $\leq r-k-1$.
\end{prop}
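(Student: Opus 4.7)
My plan is to induct on $k$, reducing the proposition to the base case $k = 1$. For $k \geq 2$, applying the $k = 1$ case to the general truncation $\rM^{(k-1)}$ (which has rank $r - k + 1$) yields an injection $\oOS^\bullet(\rM^{(k)}) \hookrightarrow \oOS^\bullet(\rM^{(k-1)})$ that is an isomorphism in degrees $\leq r - k - 1$. Composing with the inductive hypothesis $\oOS^\bullet(\rM^{(k-1)}) \hookrightarrow \oOS^\bullet(\rM)$ (iso in degrees $\leq r - k$) gives the desired injection, iso in the intersection range, namely degrees $\leq r - k - 1$.

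For the base case $k = 1$, let $\rM \cup q$ be a general extension with $\rM^{(1)} = (\rM \cup q)/q$. I would first verify, using the general extension hypothesis, that the atoms $\A(\rM^{(1)})$ are in natural bijection with $\A(\rM)$ via the ground-set identification: an atom of $\rM^{(1)}$ corresponds to a rank-$2$ flat of $\rM \cup q$ containing $q$, and each such flat has the form $\{q\} \cup a$ for some $a \in \A(\rM)$.  This identifies the ambient exterior algebras. Next, I would compare the defining ideals: since $\operatorname{rank}_{\rM^{(1)}}(S) = \min(\operatorname{rank}_\rM(S), r - 1)$ for $S \subseteq \A$, a set $S$ is dependent in $\rM^{(1)}$ iff it is dependent in $\rM$ or $|S| \geq r$. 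As subsets of size $> r$ are already dependent in $\rM$, the only genuinely new dependent sets are the bases of $\rM$, contributing new generators $\partial e_B$ to the Orlik--Solomon ideal $I_{\rM^{(1)}}$ that live in degree $r - 1$. Hence $(I_{\rM^{(1)}})_d = (I_\rM)_d$ in $\bigwedge^d R^\A$ for $d \leq r - 2$, giving $\OS^d(\rM^{(1)}) = \OS^d(\rM)$ as $R$-modules. Intersecting with $\ker \partial$ yields the identification $\oOS^d(\rM^{(1)}) = \oOS^d(\rM)$ for $d \leq r - 2$.

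It remains to check $\oOS^d(\rM^{(1)}) = 0$ for $d \geq r - 1$. This is immediate for $d \geq r$, since $\OS^d(\rM^{(1)}) = 0$ when $d$ exceeds the rank $r - 1$ of $\rM^{(1)}$. For $d = r - 1$, applying \cref{prop:partialiso} to $\rM^{(1)}$ gives that $\partial \colon \OS^{r-1}(\rM^{(1)}) \to \oOS^{r-2}(\rM^{(1)})$ is an isomorphism and hence injective, so $\oOS^{r-1}(\rM^{(1)}) = \ker \partial = 0$. The desired map $\oOS^\bullet(\rM^{(1)}) \hookrightarrow \oOS^\bullet(\rM)$ is then the identity in degrees $\leq r - 2$ and the zero map (from zero) in higher degrees, which is clearly an $R$-module injection. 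The main obstacle I anticipate is the atom identification, since the paper's definition of ``general extension'' is phrased in terms of hyperplanes rather than atoms; extracting the needed genericity takes some care, but should follow from standard matroid theory on free extensions and truncations.
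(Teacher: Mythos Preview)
The paper does not give its own proof of this proposition; it simply cites \cite[Exercise 3.5]{Dimca} and moves on. Your argument is correct and is essentially the standard way to do this exercise.

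Your one flagged concern---that the paper's ``general extension'' is defined via hyperplanes rather than requiring $q$ to be free---is not actually an obstacle. If $q$ lay in the closure of some proper flat $F$ of $\rM$, then extending $F$ to a hyperplane $H \supseteq F$ we would have $q \in \operatorname{closure}_{\rM \cup q}(H)$, so $H \cup q$ would be a hyperplane of $\rM \cup q$, contradicting the general extension hypothesis. Thus $q$ is in fact free, $\rM^{(1)}$ is the usual Dilworth truncation with rank function $\min(\operatorname{rank}_\rM(S),\,r-1)$, and your atom identification and ideal comparison go through exactly as written.

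One small remark on the direction of the map: since $I_\rM \subseteq I_{\rM^{(1)}}$, the natural algebra map goes the ``wrong'' way, $\OS^\bullet(\rM) \twoheadrightarrow \OS^\bullet(\rM^{(1)})$. Your injection $\oOS^\bullet(\rM^{(1)}) \hookrightarrow \oOS^\bullet(\rM)$ is really built by hand: identity in degrees $\leq r-2$ (where the quotients literally coincide) and the zero map from $0$ in higher degrees. This is fine for the $R$-module statement the proposition asks for, and indeed is even a graded ring map since any product landing in degree $\geq r-1$ already vanishes in $\oOS^\bullet(\rM^{(1)})$.
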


Let us now fix a general flag $\mathscr F$ of $\cM$, by which we mean a sequence $\cM^{(0)}, \cM^{(1)}, \dotsc, \cM^{(r-1)}$ of general truncations of $\cM$.  That is, $\cM^{(0)} = \cM$, and for all $k \geq 1$ we have $\cM^{(k)} = ( \cM^{(k-1)} \cup q_k)/q_k$ for a general extension $\cM^{(k-1)} \cup q_k$ of $\cM^{(k-1)}$ .
A tope $P \in \T$ is \emph{$k$-bounded} (with respect to $\mathscr F$) if it is a bounded tope with respect to $q_k$ in the truncation $\cM^{(k-1)}$.
Let $\T^{q_1,q_2,\ldots,q_k}$ denote the set of $k$-bounded topes of $\cM$.  
For a chirotope $\chi$, we let $\chi^{(k-1)}$ denote the corresponding chirotope on the truncation $\cM^{(k-1)}$ and let $\bOmega(P, \chi^{(k-1)})$ denote the canonical form of a tope $P \in \T(\cM^{(k-1)})$.
Via \cref{prop:trunc}, we may view $\bOmega(P, \chi^{(k-1)})$ as an element of $\oOS^{r-k}(\rM)$, and by applying Theorem~\ref{thm:basis} we conclude the following.

\begin{cor}\label{cor:trunc}
Fix $\chi$ on $\cM$, and a general flag $\mathscr F$ on $\cM$.  Then, we have that
\[
\{\bOmega(P,\chi^{(k-1)}) : P \in \T^{q_1,q_2,\ldots,q_k}(\cM)\} \quad\text{is an $R$-basis of}\quad \oOS_R^{r-k}(\rM)
\]
for any commutative ring $R$.
\end{cor}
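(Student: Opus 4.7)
The plan is to deduce \cref{cor:trunc} as a direct consequence of applying \cref{thm:basis} to the truncated oriented matroid $\cM^{(k-1)}$, and then transferring the resulting basis into $\oOS^{r-k}_R(\rM)$ via \cref{prop:trunc}.

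First, I would apply \cref{thm:basis} to the pair $(\cM^{(k-1)}, \chi^{(k-1)})$ with the general extension $\cM^{(k-1)} \cup q_k$.  Since $\cM^{(k-1)}$ is a loopless oriented matroid of rank $r-k+1$, the theorem yields that
\[
\{\bOmega(P,\chi^{(k-1)}) : P \in \T^{q_k}(\cM^{(k-1)})\}
\]
is an $R$-basis of $\oOS^{r-k}_R(\rM^{(k-1)})$, which is the top graded piece of $\oOS^\bullet_R(\rM^{(k-1)})$.  By the definition of the general flag $\mathscr F$ and of $k$-boundedness, this indexing set is precisely $\T^{q_1,q_2,\dotsc,q_k}(\cM)$.

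Next, I would invoke \cref{prop:trunc} applied to the general truncation $\rM^{(k-1)}$, which asserts that the natural inclusion $\oOS^\bullet_R(\rM^{(k-1)}) \hookrightarrow \oOS^\bullet_R(\rM)$ is an isomorphism in degrees $\leq r-(k-1)-1 = r-k$.  Since $r-k$ is precisely the top degree of $\oOS^\bullet_R(\rM^{(k-1)})$, this gives an $R$-module isomorphism $\oOS^{r-k}_R(\rM^{(k-1)}) \overset{\sim}{\to} \oOS^{r-k}_R(\rM)$, under which each basis element $\bOmega(P,\chi^{(k-1)})$ is identified with the element of $\oOS^{r-k}_R(\rM)$ of the same name, as noted in the paragraph preceding the corollary.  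A basis maps under an isomorphism to a basis, and the corollary follows.

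The content is really a repackaging of two earlier results, so I do not anticipate any substantive obstacle.  The only point requiring a moment of care is confirming the identification $\T^{q_k}(\cM^{(k-1)}) = \T^{q_1,q_2,\dotsc,q_k}(\cM)$, but this holds immediately from how the set of $k$-bounded topes was defined.
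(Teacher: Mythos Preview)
Your proposal is correct and follows essentially the same approach as the paper: the paper's argument is the single sentence preceding the corollary, which says to apply \cref{thm:basis} to $\cM^{(k-1)}$ and then use \cref{prop:trunc} to identify $\oOS^{r-k}_R(\rM^{(k-1)})$ with $\oOS^{r-k}_R(\rM)$, exactly as you do. Your observation that $\T^{q_k}(\cM^{(k-1)}) = \T^{q_1,\ldots,q_k}(\cM)$ by definition is correct and is the only bookkeeping needed.
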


We recover the following result due to Greene and Zaslavasky \cite{GZ} and Las Vergnas \cite{LV77}.

\begin{cor}
For $1 \leq k \leq r$, we have $\dim \oOS^{r-k}(\rM) = | \T^{q_1,q_2,\ldots,q_{k-1},q_k}(\cM)|$.
\end{cor}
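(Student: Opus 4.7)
The plan is to deduce this corollary as an immediate consequence of \cref{cor:trunc}. Indeed, \cref{cor:trunc} asserts that for any commutative ring $R$, the set $\{\bOmega(P, \chi^{(k-1)}) : P \in \T^{q_1, q_2, \ldots, q_k}(\cM)\}$ is an $R$-basis of $\oOS_R^{r-k}(\rM)$. Specializing $R$ to be a field (e.g.\ $\QQ$ or $\CC$), the cardinality of this basis is precisely the dimension of $\oOS^{r-k}(\rM)$ as an $R$-vector space, which gives the desired equality $\dim \oOS^{r-k}(\rM) = |\T^{q_1, q_2, \ldots, q_k}(\cM)|$.

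The only thing to verify is that the stated dimension is intrinsic to $\rM$ and does not depend on the auxiliary choices of chirotope $\chi$ or general flag $\mathscr F = (q_1, \dotsc, q_{k})$. Independence from $\chi$ and $\mathscr F$ on the left-hand side is automatic since $\oOS^{r-k}(\rM)$ is defined using only the matroid $\rM$. Consequently, the right-hand side $|\T^{q_1, q_2, \ldots, q_k}(\cM)|$, while a priori depending on the choice of general flag, must be the same for all such choices; this is a byproduct of the corollary that recovers a known combinatorial invariance.

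I do not anticipate any obstacle: the entire content of the corollary is already packaged in \cref{cor:trunc}, and the proof is a one-line specialization. The interesting work was already done in establishing \cref{thm:basis} (via \cref{prop:spans} and \cref{prop:indep}) and its truncation-lift to \cref{cor:trunc} using \cref{prop:trunc}.
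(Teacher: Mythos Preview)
Your proposal is correct and matches the paper's approach: the paper states this corollary immediately after \cref{cor:trunc} without proof, indicating it is an immediate consequence of having an explicit basis, exactly as you argue.
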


\begin{cor}\label{cor:Yos} Suppose that $\cM$ is the oriented matroid associated to a hyperplane arrangement $\cA$.  Then the basis of \cref{cor:trunc} agrees with that of Yoshinaga \cite{Yoshinaga} up to a factor of $2\pi i$.
\end{cor}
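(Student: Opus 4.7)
The plan is to reduce the comparison to the identification of $\bOmega(P, \chi)$ with the polytope canonical form $\Omega_{\overline P}$ already established in \cref{thm:real}, and then match Yoshinaga's chamber basis element for $\overline P$ against $\Omega_{\overline P}$ under the Brieskorn isomorphism, tracking only the normalization.

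First, I would verify that the indexing sets coincide in the realizable setting. In \cref{cor:trunc}, the basis of $\oOS_R^{r-1}(\rM)$ is indexed by $\T^{q_1}(\cM) = \T^q(\cM)$, the topes $P$ with $(P,+)$ bounded in $(\cM \cup q, q)$. When $\cM$ is realized by $\{\ell_i : i \in E\} \subset V^\vee$, a general extension $\cM \cup q$ corresponds to adjoining a generic linear functional, which in the affine chart $\{q = 1\} \subset \PP V$ amounts to adjoining the hyperplane at infinity. Under this identification, $\T^q(\cM)$ is exactly the set of bounded chambers of the affinization, which is the indexing set of Yoshinaga's chamber basis \cite{Yoshinaga}.

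Second, I would recall Yoshinaga's construction. For each bounded chamber $\overline P$ of the affine arrangement, Yoshinaga defines a basis element $\zeta_{\overline P}$ of the top reduced Orlik--Solomon algebra, characterized (up to a universal normalization of the Brieskorn isomorphism) by being the class of the canonical differential form on $\overline X$ with logarithmic singularities along $\overline D$ whose associated rational $(r-1)$-form on $\PP V$ is the polytope canonical form $\Omega_{\overline P}$. Equivalently, $\zeta_{\overline P}$ is characterized by the recursive residue property \eqref{eq:PGdef} of \cref{def:PG} along all boundary hyperplanes of $\overline P$.

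Third, by \cref{thm:real}, the image of $\bOmega(P,\chi)$ under the Brieskorn isomorphism \eqref{eq:Bri} sending $e_i \mapsto \dlog \ell_i$ is exactly $\Omega_{\overline P}$. Yoshinaga works instead with the integer normalization $e_i \mapsto \tfrac{1}{2\pi i}\dlog \ell_i$, which identifies $\oOS^{\bullet}_\CC(\rM)$ with the singular cohomology $H^\bullet(\overline U(\CC),\CC)$ in the lattice compatible with integer periods. Consequently, the two bases differ precisely by the scalar $(2\pi i)^{r-1}$ coming from comparing the two normalizations in top degree, yielding the asserted agreement up to a power of $2\pi i$.

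The main obstacle is only bookkeeping: one must confirm that Yoshinaga's dual characterization of $\zeta_{\overline P}$ (which in his paper is phrased via an integration pairing against real chambers) coincides with being the logarithmic representative of $\Omega_{\overline P}$, and then track the power of $2\pi i$ accurately. Both facts follow from the standard computation that for a simple pole hyperplane $H = \{f = 0\}$, the period $\int_{\gamma} \dlog f$ around a small loop $\gamma$ linking $H$ equals $2\pi i$, applied iteratively via the residue recursion that characterizes both $\bOmega(P,\chi)$ (by \cref{cor:topeform}) and $\zeta_{\overline P}$ (by construction).
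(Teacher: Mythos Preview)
Your route is genuinely different from the paper's. The paper argues purely algebraically: the identity of \cref{prop:spans} expresses each $(-1)^{|C_{B\cup q}^-|-1}\chi(B)\,\partial e_B$ as a sum of the $\bOmega(P,\chi)$ over $P \in \T_B^q$, and since the $\partial e_B$ span $\oOS^{r-1}(\rM)$ this identity uniquely determines the basis $\{\bOmega(P,\chi)\}$; the paper then simply observes that this identity is precisely the map called $\xi$ in \cite[Theorem~3.2]{Yoshinaga}, so the two bases agree once the normalizations of \eqref{eq:Bri} are matched. No polytope geometry or analytic residue computation is invoked. Your approach instead passes through \cref{thm:real}, identifying $\bOmega(P,\chi)$ with the polytope form $\Omega_{\overline P}$ and then attempting to match $\Omega_{\overline P}$ with Yoshinaga's element. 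The difficulty is that Yoshinaga does not \emph{define} his basis element as $\Omega_{\overline P}$; he defines it via the map $\xi$ (or dually via the integration pairing), so the step you label ``only bookkeeping'' is where the actual content lies. Your period-and-residue sketch in the final paragraph, once carried out, would essentially re-derive the identity of \cref{prop:spans} in analytic language. The paper's argument is shorter and cleaner because it matches the defining algebraic identities directly, bypassing both \cref{thm:real} and any analytic interpretation.
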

\begin{proof}
It suffices to prove this for $k = 1$.  The identity of \cref{prop:spans} uniquely determines the basis of \cref{thm:basis}.  This identity is the same as the map named $\xi$ in \cite[Theorem 3.2]{Yoshinaga}.  The difference in a factor of $2\pi i $ arises from that the isomorphism \eqref{eq:Bri} is given instead by $e_i \mapsto \frac{1}{2\pi i }\dlog \ell_i$ in \cite{Yoshinaga}.
\end{proof}

\begin{rem}\label{rem:mult}
It would be interesting to give a simple description of the structure constants of the algebra $\oOS^\bullet(\rM)$ with respect to the basis of \cref{thm:basis}.  An alternating triple summation formula can be obtained by combining the following ingredients:
\begin{enumerate}
\item the formula \eqref{eq:cantriang} for $\rM^{(k)}$ expresses the canonical forms $\bOmega(P,\chi)$ in terms of the spanning set $\{\partial e_S \mid S \text{ independent}\}$ of $\oOS^\bullet(\rM)$;
\item we have the product formula 
$$\partial e_S \partial e_T = \sum_{i=0}^{\ell-1} (-1)^{i + \ell - 1} \partial e_{S \cup (T \setminus t_{\ell-i})}$$
in $\oOS^\bullet(\rM)$, where $S = \{s_1,\ldots,s_k\}$ and $T = \{t_1,\ldots, t_\ell\}$;
\item \cref{prop:spans} expands $\{\partial e_B \mid B \text{ basis}\}$ in terms of the canonical form basis of \cref{thm:basis}.
\end{enumerate}
\end{rem}

\section{Basis for Aomoto cohomology}

In this section, we take $R = \CC$, so that $\oOS^\bullet_{\CC}(\rM)$ is a graded finite-dimensional $\CC$-algebra.  We assume that a chirotope $\chi$ of $\cM$ has been fixed and write $\bOmega(P)$ for $\bOmega(P,\chi)$.
We fix an element $0\in E$ in the ground set, and work with the affine oriented matroid $(\cM, 0)$.  Recall from \eqref{eq:affine} (with $a = 0$) that we may view $\oOS^\bullet(\rM)$ as generated by the $\be_i := e_i - e_0$.
Without loss of generality, we assume that $\rM$ is simple.

\medskip
Let $\omega = \sum_{s\in E\setminus 0} \lambda_s \be_s \in \oOS^1(\rM)$ be a degree one element, where $\lambda_s \in \CC$ are complex numbers.  Consider the complex $(\oOS^\bullet,\omega)$
$$
0 \to \oOS^0(\rM) \stackrel{\omega}{\longrightarrow} \oOS^1(\rM)  \stackrel{\omega}{\longrightarrow} \oOS^2(\rM)  \stackrel{\omega}{\longrightarrow} \cdots  \stackrel{\omega}{\longrightarrow} \oOS^{r-1}(\rM) \stackrel{\omega}{\longrightarrow} 0,
$$
which is sometimes called the \emph{Aomoto complex}.
In the case of a projective complex hyperplane arrangement complement $\overline U(\CC)$, under a genericity assumption on $\omega$, the cohomology $H^*(\oOS^\bullet(\rM),\omega)$ of the Aomoto complex was shown to be equal to the twisted algebraic de Rham cohomology $H_{dR}^*(U(\CC),d + \omega \wedge)$, which is further equal to the topologically defined cohomology $H^*(U,\L_\omega)$ of a rank-one local system $\L_\omega$ when the $|\lambda_s|$ are sufficiently small \cite{ESV92}.  In this setting, the monodromy of the local system $\L_\omega$ around the hyperplane $H_s$ is given by $\exp(-2 \pi i \lambda_s)$.

For a generic $\omega \in \oOS^1(\rM)$, the cohomology $H^*(\oOS^\bullet,\omega)$ is in fact concentrated in degree $r-1$ \cite{SV91, Yuz95}.
Our goal is to construct a basis for this \emph{Aomoto cohomology} 
$$
H^{r-1}(\oOS^\bullet,\omega) := \oOS^{r-1}(\rM)/(\omega \wedge \oOS^{r-2}(\rM)).
$$
We prepare by noting the following numerology; see \cite{GZ, Yuz95}.  Let $\beta(\rM)$ denote the beta invariant of a matroid $\rM$.

\begin{prop}\label{thm:bounded}
We have $|\T^0(\cM)| = \beta(\rM)$, and $\dim H^{r-1}(\oOS^\bullet,\omega)  = \beta(\rM)$ for generic $\omega \in \oOS^1$.
\end{prop}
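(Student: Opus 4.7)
The plan is to address the two equalities separately: the first is a classical combinatorial identity, and the second is a standard Euler characteristic computation given an exactness theorem.

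For $|\T^0(\cM)| = \beta(\rM)$, the most direct route is to cite the result of Las Vergnas \cite{LV77}, who extended Greene--Zaslavsky's identity \cite{GZ} for real hyperplane arrangements to the setting of oriented matroids. A self-contained alternative is a direct induction: for $e \in E\setminus 0$ that is neither a loop nor a coloop, both $|\T^0(\cM)|$ and $\beta(\rM)$ satisfy the deletion-contraction recurrence $f(\cM) = f(\cM \setminus e) + f(\cM/e)$ with matching base cases (notably, when $0$ is a coloop and all other elements are parallel to it, both invariants equal $1$). The bounded-tope recurrence follows by stratifying topes of $\cM$ according to whether $e$ participates in the facet structure, and tracking how boundedness with respect to $0$ is preserved under deletion and contraction.

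For $\dim H^{r-1}(\oOS^\bullet,\omega) = \beta(\rM)$, the approach is an Euler characteristic argument. By Schechtman--Varchenko \cite{SV91} and Yuzvinsky \cite{Yuz95}, for generic $\omega \in \oOS^1(\rM)$ the Aomoto complex $(\oOS^\bullet,\omega)$ is exact in every degree except $r-1$, so
\[
\dim H^{r-1}(\oOS^\bullet,\omega) \;=\; (-1)^{r-1}\sum_{k=0}^{r-1}(-1)^k \dim \oOS^k(\rM) \;=\; (-1)^{r-1}\overline\pi(\rM,-1),
\]
where $\overline\pi(\rM,t) = \sum_k t^k \dim \oOS^k(\rM)$. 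Using the factorization $\pi(\rM,t) = (1+t)\overline\pi(\rM,t)$ of the Poincar\'e polynomial of $\OS^\bullet(\rM)$ and the classical identity $\pi(\rM,t) = (-t)^r\chi(\rM,-1/t)$ relating it to the characteristic polynomial, one has $\pi(\rM,-1) = \chi(\rM,1) = 0$ for loopless $\rM$ of positive rank, and a single application of L'H\^{o}pital gives $\overline\pi(\rM,-1) = \pi'(\rM,-1) = \chi'(\rM,1) = (-1)^{r-1}\beta(\rM)$. Combining yields $\dim H^{r-1}(\oOS^\bullet,\omega) = \beta(\rM)$.

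The main conceptual input is the Schechtman--Varchenko--Yuzvinsky exactness theorem for generic parameters, which is nontrivial but classical. Everything else reduces to careful bookkeeping with sign conventions relating $\beta(\rM)$ to $\chi'(\rM,1)$ and a standard deletion-contraction induction for the bounded-tope count, so no new obstacles are expected.
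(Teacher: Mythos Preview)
The paper does not actually prove this proposition: it states ``We prepare by noting the following numerology; see \cite{GZ, Yuz95}'' and then records the result without argument. Your proposal therefore goes beyond what the paper provides, and it does so correctly: for the first equality you cite precisely the references the paper intends (\cite{GZ} for arrangements, \cite{LV77} for the oriented matroid generalization), and for the second you correctly invoke the Schechtman--Varchenko/Yuzvinsky exactness theorem (the content of the paper's citation of \cite{Yuz95}) and compute the Euler characteristic. The polynomial bookkeeping relating $\overline\pi(\rM,-1)$, $\chi'(\rM,1)$, and $\beta(\rM)$ is standard and your signs check out.

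One small remark on the optional deletion--contraction sketch: the base case ``$0$ is a coloop and all other elements are parallel to it'' is internally inconsistent (an element parallel to $0$ would prevent $0$ from being a coloop). The actual base cases one reaches are matroids in which every element of $E\setminus 0$ is a coloop, or more generally direct sums involving $0$; but since you present this only as a self-contained alternative to the citation, and the citation already suffices, this is a cosmetic issue rather than a gap.
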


Let $V \subset \oOS^{r-1}(\rM)$ denote the subspace spanned by the canonical forms $\{\bOmega(P) \mid P \in \T^0\}$.  Our main theorem is the following.  In the realizable case, a similar result appeared in \cite[Theorem 3.1]{BY16}.  The decomposition $\oOS^{r-1}(\rM) = V\oplus W$ that we will define below corresponds to the decomposition denoted $R[bch(\mathcal A)] \oplus R[uch(\mathcal A)]$ for the realizable case in \emph{loc.~cit.}.

\begin{thm}\label{thm:Aomoto}
For generic $\omega \in \oOS^1(\rM)$, the subspace $V$ maps isomorphically to the cohomology $H^{r-1}(\oOS^\bullet,\omega)$.  Thus $\{\bOmega(P) \mid P \in \T^0\}$ form a basis of $H^{r-1}(\oOS^\bullet,\omega)$ over $\CC$.
\end{thm}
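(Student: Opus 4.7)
The plan is to combine a dimension count with a surjectivity argument. By \cref{thm:bounded}, $\dim H^{r-1}(\oOS^\bullet, \omega) = \beta(\rM) = |\T^0(\cM)|$, and the subspace $V$ has at most $|\T^0|$ generators; thus it suffices to prove that the composition $V \hookrightarrow \oOS^{r-1}(\rM) \twoheadrightarrow H^{r-1}(\oOS^\bullet, \omega)$ is surjective, which will force $\dim V = |\T^0|$ and hence $\{[\bOmega(P)] : P \in \T^0\}$ to be a basis of $H^{r-1}(\oOS^\bullet, \omega)$.

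To prove surjectivity, I would start from the basis of $\oOS^{r-1}(\rM)$ supplied by \cref{thm:basis}: fix a general extension $\cM \cup q$, so that $\{\bOmega(P,\chi) : P \in \T^q(\cM)\}$ is a basis of $\oOS^{r-1}(\rM)$. The task reduces to showing that for every $P \in \T^q(\cM) \setminus \T^0(\cM)$, the class $[\bOmega(P,\chi)]$ lies in the $\CC$-span of $\{[\bOmega(P')] : P' \in \T^0(\cM)\}$ in $H^{r-1}(\oOS^\bullet, \omega)$.

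The main mechanism would be a family of ``flip relations'' in $H^{r-1}$. Given adjacent topes $P, P'$ separated by a facet $a \in \partial P$, lift the lower-degree canonical form $\bOmega(P/a, \chi/^P a) \in \oOS^{r-2}(\rM/a)$ to an element $\tilde\eta \in \oOS^{r-2}(\rM)$ via a section of the surjection in \eqref{eq:oRes}. Since $[\omega \wedge \tilde\eta] = 0$ in $H^{r-1}$, computing the residues $\Res_b(\omega \wedge \tilde\eta)$ for every atom $b \in \A$ using the recursion of \cref{cor:topeform} and invoking the injectivity from \cref{lem:Resinj} should determine the expansion of $\omega \wedge \tilde\eta$ in the basis of \cref{thm:basis}, yielding a relation of the form
\[
\lambda_{(a)} \bigl([\bOmega(P,\chi)] + [\bOmega(P',\chi)]\bigr) = \sum_{Q} c_Q\, [\bOmega(Q,\chi)]
\]
in $H^{r-1}$, where $\lambda_{(a)}$ is a nonzero linear combination of $\{\lambda_s : s \in a\}$ and the topes $Q$ on the right are ``closer'' to $\T^0$ than $P$ in a combinatorial sense. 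Using connectivity of the tope graph \cite[Proposition 4.2.3]{BLVSWZ99}, I would then induct on a suitable distance function to $\T^0$, applying flip relations iteratively to rewrite each $[\bOmega(P,\chi)]$ with $P \in \T^q \setminus \T^0$ as a combination of $\{[\bOmega(P')] : P' \in \T^0\}$.

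The main obstacle will be establishing the flip relation in sufficiently precise form, namely controlling that the topes $Q$ on the right-hand side are provably closer to $\T^0$ than $P$ (so the induction terminates) and that the coefficient $\lambda_{(a)}$ is generically nonzero. The computation involves delicate sign bookkeeping arising from the chirotope and from the antisymmetry of the wedge product, together with a careful analysis of how the triangulation formula \eqref{eq:cantriang} interacts with a chosen lift of $\bOmega(P/a, \chi/^P a)$ from $\oOS^{r-2}(\rM/a)$ to $\oOS^{r-2}(\rM)$; this is where the genericity of $\omega$ enters.
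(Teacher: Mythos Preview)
Your reduction to surjectivity via the dimension count of \cref{thm:bounded} and the basis of \cref{thm:basis} is correct and matches the paper's setup, including the choice of a general extension with $\T^0 \subset \T^q$ so that $\oOS^{r-1}(\rM) = V \oplus W$ with $W$ spanned by $\{\bOmega(P) : P \in \T^q \setminus \T^0\}$.

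The paper's surjectivity argument, however, is structurally different from your proposed flip-relation induction on a distance in the tope graph. The paper fixes $i \in E \setminus 0$, writes $\omega = \omega' + a_i \be_i$, and proves by deletion--contraction induction on $|E|$ and $r$ that $\pi(\omega' \wedge \oOS^{r-2}) = W$ (\cref{prop:omega'}): the kernel $K = \ker(\Res_i|_W)$ is handled via the inductive hypothesis for $\cM' = \cM \setminus i$, and the image $R = \Res_i W$ via the inductive hypothesis for $\cM'' = \cM/i$. A rank/perturbation argument then passes from $\omega'$ to generic $\omega$. The key point is that the canonical-form basis is compatible with $\Res_i$ (each $\bOmega(P)$ maps either to $0$ or to $\pm$ a basis element of $\oOS^{r-2}(\rM'')$), so the residue exact sequence \eqref{eq:oRes} gives exact control over which basis elements appear.

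Your flip-relation mechanism, by contrast, has a genuine gap precisely where you flag it. For an arbitrary lift $\tilde\eta \in \oOS^{r-2}(\rM)$ of $\bOmega(P/a, \chi/^P a)$, the residues $\Res_b(\omega \wedge \tilde\eta)$ at atoms $b \neq a$ depend on the choice of lift (the ambiguity lies in $\oOS^{r-2}(\rM \setminus a)$, which is large), and there is no evident reason that the expansion of $\omega \wedge \tilde\eta$ in the basis of \cref{thm:basis} involves only topes $Q$ that are ``closer'' to $\T^0$ under any distance function. Indeed, knowing all residues determines $\omega \wedge \tilde\eta$ uniquely by \cref{lem:Resinj}, but does not make the basis expansion transparent, since canonical forms of distinct topes can have the same residue along a given atom. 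Without a specific lift and a specific distance function for which this monotonicity is provable, the induction cannot be set up. The paper's deletion--contraction scheme avoids this entirely by replacing ``closer to $\T^0$'' with an honest reduction to smaller oriented matroids.
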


For $r = 1$, the statement is clear, so we assume that $r \geq 2$.
We shall prove \cref{thm:Aomoto} by a deletion-contraction induction on the rank $r$ and the cardinality of $E$.
Let us hence assume that \cref{thm:Aomoto} has been shown for oriented matroids that either have fewer elements or lower rank than $\cM$.

\def\starq{q}

Our proof will involve a direct sum decomposition of $\oOS^{r-1}(\rM)$ that depends on the following choice.
Choose a general extension $\tM = \cM \cup \starq$ of $\cM$ by $\starq$, and let $\tilde E =E \cup \starq$.  By letting $\starq$ be a general perturbation of $0$ (see for example \cite[Section 1.2]{San02}), we may assume that if a tope $X$ of $\cM$ is bounded with respect to $0$ then it is also bounded with respect to $\starq$.
As before, let $\T = \T(\cM)$ denote the set of topes of $\cM$, and let $\T^0(\cM) \subset \T$ (resp.\ $\T^\starq(\cM) \subset \T$) be the subset of topes that are bounded with respect to $0$ (resp.\ $\starq$).

\smallskip
We now define the direct sum decomposition of $\oOS^{r-1}(\rM)$.  Let 
$$
V = \bigoplus_{P \in \T^0} \CC \cdot \bOmega(P), \qquad W = \bigoplus_{P \in \T^\starq \setminus \T^0} \CC \cdot \bOmega(P).
$$
Since we have assumed that $\T^0 \subset \T^\starq$, by \cref{thm:basis}, we have 
\begin{equation}
\label{eq:mainsum}
\oOS^{r-1}(\rM) = \bigoplus_{P \in \T^\starq} \CC \cdot \bOmega(P)= V \oplus W.
\end{equation}
Let $\pi:\oOS^{r-1}(\rM) \to W$ denote the orthogonal projection to $W$ with respect to the direct sum decomposition \eqref{eq:mainsum}.  We shall show that $\pi(\omega \wedge \oOS^{r-2}(\rM)) = W$.
This implies \cref{thm:Aomoto} since it implies that $V$ spans $H^{r-1}(\oOS^\bullet,\omega)$, and the vector spaces $V$ and $H^{r-1}(\oOS^\bullet,\omega)$ have the same dimension (\cref{thm:bounded}).
Let $(\cM,\cM',\cM'')$ be a deletion-contraction triple with respect to $i \in E \setminus 0$.  We have the residue exact sequence \eqref{eq:oRes}
$$
0 \to \oOS^{r-1}(\rM')  \to \oOS^{r-1}(\rM) \stackrel{\Res_i}{\longrightarrow} \oOS^{r-2}(\rM'') \to 0.
$$
Define $V',V''$, $W', W''$ and $\pi', \pi''$ similarly for $\cM', \cM''$.   
Recall that $i$ is a said to be a facet of a tope $P$ if $X$ defined by $X(j)=P(j)$ for $j \neq i$ and $X(i) = 0$ is a covector.  We say that two topes $P$ and $Q$ \emph{share a facet $i$} if $P(j) = Q(j)$ for all $j \neq i$, and $P(i)Q(i) = -$.

\smallskip
Let $\omega' = \sum_{j \in E \setminus \{0,i\}} a_j \be_j$, which we view as both an element of $\oOS^1(\rM')$ and as an element of $\oOS^1(\rM'')$.

\begin{prop}\label{prop:omega'}
Suppose that $\omega'$ is generic.  Then we have $\pi(\omega' \wedge \oOS^{r-2}(\rM)) = W$.
\end{prop}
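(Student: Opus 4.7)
The plan is to prove \cref{prop:omega'} by simultaneous induction with \cref{thm:Aomoto} on $|E|+r$, using the deletion-contraction triple at a non-coloop element $i\in E\setminus 0$, with the induction hypothesis applying to $\cM'$ (one fewer element) and $\cM''$ (smaller rank). The starting structural observation is that since $\omega'$ contains no $\be_i$ component, wedging by $\omega'$ commutes with both $\iota_i$ and $\Res_i$; this turns the residue short exact sequence \eqref{eq:oRes} into a morphism of short exact sequences of the complexes $(\oOS^\bullet,\omega')$ of $\cM'$, $\cM$, and $\cM''$.

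The main technical step is to choose $\starq$ as a sufficiently generic perturbation of $0$ so that its restriction to $\cM'$ and contraction to $\cM''$ are general extensions, and then to verify that $\iota_i$ restricts to injections $V'\hookrightarrow V$ and $W'\hookrightarrow W$, while $\Res_i$ restricts to surjections $V\twoheadrightarrow V''$ and $W\twoheadrightarrow W''$. These yield short exact sequences $0\to V'\to V\to V''\to 0$ and $0\to W'\to W\to W''\to 0$, so that the orthogonal projections $\pi,\pi',\pi''$ commute with $\iota_i$ and $\Res_i$. At the level of bases, for a tope $P'$ of $\cM'$ with a unique lift $P$ to $\cM$ one verifies $\iota_i(\bOmega(P'))=\bOmega(P)$ by the uniqueness of the recursive characterization in \cref{thm:main}; for a $P'$ with paired lifts $P,P^*$ of $\cM$ sharing facet $i$ one has $\iota_i(\bOmega(P'))=\bOmega(P)+\bOmega(P^*)$, the sum lying in $\ker\Res_i$ because of the sign cancellation ${}_P\chi/^i = -{}_{P^*}\chi/^i$. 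A dimension count via \cref{thm:bounded} and the identity $\beta(\rM)=\beta(\rM')+\beta(\rM'')$ is consistent with this clean splitting.

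Granting compatibility, the proposition follows by a two-step diagram chase. Given $\alpha\in W$, the induction hypothesis applied to $\cM''$ yields $\gamma''\in\oOS^{r-3}(\rM'')$ with $\pi''(\omega'\wedge\gamma'')=\Res_i(\alpha)\in W''$. Lifting to any $\gamma\in\oOS^{r-2}(\rM)$ with $\Res_i(\gamma)=\gamma''$ and using $\Res_i\circ\pi=\pi''\circ\Res_i$, one finds $\pi(\omega'\wedge\gamma)-\alpha\in W\cap\iota_i(\oOS^{r-1}(\rM'))=\iota_i(W')$. The induction hypothesis for $\cM'$ then produces $\delta'\in\oOS^{r-2}(\rM')$ so that $\beta:=\gamma+\iota_i(\delta')$ satisfies $\pi(\omega'\wedge\beta)=\alpha$.

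The main obstacle is the compatibility step: establishing the clean interaction of the $V\oplus W$ decompositions with $\iota_i$ and $\Res_i$. This demands careful tracking of how the canonical forms transform under deletion and contraction, together with the verification that boundedness with respect to both $0$ and $\starq$ transfers under the relevant lifts and restrictions. A secondary concern is the case in which $i$ is a coloop of $\rM$, which may require separate treatment, and ensuring that $\starq$ can be chosen so that its restriction to $\cM'$ and its contraction to $\cM''$ are simultaneously general extensions.
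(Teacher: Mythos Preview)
Your overall inductive scaffold matches the paper's: use the deletion--contraction triple $(\cM,\cM',\cM'')$ at a non-coloop $i\in E\setminus 0$, invoke the induction hypothesis on $\cM'$ and $\cM''$, and exploit that wedging by $\omega'$ commutes with $\iota_i$ and $\Res_i$. Your handling of the $\iota_i$-side is essentially the same as the paper's: the identification $\iota_i(\bOmega(P'))=\bOmega(P)$ or $\bOmega(P)+\bOmega(P^*)$, together with $V'\subset V$, gives control over $\ker(\Res_i|_W)$.

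The genuine gap is on the $\Res_i$-side. Your claimed short exact sequence $0\to W'\to W\to W''\to 0$, and hence the commutativity $\pi''\circ\Res_i=\Res_i\circ\pi$, generally fails. Take $P\in\T^q\setminus\T^0$ having $i$ as a facet, with $Q$ the tope on the other side. If $Q\in\T^0$, then $P/i=Q/i\in(\T'')^0$, so $\Res_i\bOmega(P)=\pm\bOmega(P/i)\in V''$, \emph{not} $W''$. Thus $\Res_i|_W$ does not land in $W''$, and your diagram chase breaks precisely at the step ``$\pi(\omega'\wedge\gamma)-\alpha\in W\cap\iota_i(\oOS^{r-1}(\rM'))$'': after applying induction on $\cM''$ you can only arrange $\Res_i(\omega'\wedge\gamma)=\Res_i(\alpha)\bmod V''$, and the discrepancy in $V''$ need not come from $\Res_i(V)$, because the relevant basis element of $V''$ is hit from $W$, not from $V$.

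The paper deals with exactly this obstruction. It isolates the subspace $\tV''\subset V''$ spanned by those $\Res_i\bOmega(P)$ with $P\in\T^q\setminus\T^0$ and $Q\in\T^0$, and first shows directly (using the $\cM'$-induction applied to $\bOmega(P)+\bOmega(Q)\in W'$) that $\tV''\subset\Res_i(\pi(\omega'\wedge\oOS^{r-2}))$. Only then is the $\cM''$-induction used, working modulo $\tV''$ rather than modulo all of $V''$. Your argument would need this extra step; without it the compatibility you assert does not hold.
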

\begin{proof}
The statement is easy to check directly when $r = 2$, so we assume that $r > 2$.  If $i$ is a coloop of $\cM$, then the maps $\Res_i: \oOS^{r-1}(\rM) \to \oOS^{r-2}(\rM'')$ and $\Res_i: \oOS^{r-2}(\rM) \to \oOS^{r-3}(\rM'')$ are isomorphisms.  The statement $\pi(\omega' \wedge \oOS^{r-2}(\rM)) = W$ follows from the corresponding statement for $\rM''$.  Henceforth, we assume that $i$ is not a coloop.

Let
$$
K:=\ker(\Res_i|_W) \qquad \text{and} \qquad R:=\Res_i W.
$$
We will show that 
$$
K \subseteq \pi(\omega' \wedge \oOS^{r-2}) \qquad \text{and} \qquad  R \subseteq \Res_i(\pi(\omega' \wedge \oOS^{r-2})).
$$
Since $ \pi(\omega' \wedge \oOS^{r-2}) \subseteq W$, together these statements imply that $ \pi(\omega' \wedge \oOS^{r-2})  = W$.

Let $B''$ denote the basis of \cref{thm:basis} applied to the oriented matroid $\cM'' = \cM/i$ with general extension $\tM/i$.
The map $\Res_i$ sends each element of $\{\bOmega(P) \mid P \in \T^\starq\}$ either to 0 or to an element of $B''$, up to sign.  It follows that $K$ is spanned by the following two kinds of classes: (i) $\bOmega(P)$ for $P \in \T^\starq - \T^0$, where $i$ is not a facet of $P$, and (ii) $\bOmega(P) + \bOmega(Q)$ where $P,Q \in \T^\starq - \T^0$ are topes that share the facet $i$.

For (i), we have $\bOmega(P) \in W'$, so by the induction hypothesis applied to $\cM'$, there exists $x \in \oOS^{r-2}(\rM')$ such that
$$
\pi'(\omega' \wedge x) = \bOmega(P).
$$
But $\ker(\pi') = V' \subset V = \ker(\pi)$ so it follows that $\pi(\omega' \wedge x) = \bOmega(P)$ and thus $\bOmega(P) \in \pi(\omega' \wedge \oOS^{r-2} )$.  For (ii), the same argument shows that $\bOmega(P)+\bOmega(Q) \in \pi(\oOS^{r-2} \wedge \omega')$.  We conclude that $K \subseteq \pi(\oOS^{r-2} \wedge \omega')$.

Now we consider $R$.  The space $R$ is spanned by the classes $\Res_i \bOmega(P)$ where $P \in \T^\starq -\T^0$ has $i$ as a facet.  Let $P$ be such a tope, and let $Q$ be the tope such that $P,Q$ share the facet $i$.  If $Q \in \T^0$, then $\Res_i \bOmega(P) = \bOmega(P'') \in V''$, where $P'' \in (\T'')^0$.  In this case, $ \bOmega(P) + \bOmega(Q) \in W'$ so by the same argument as above, there exists $x \in \oOS^{r-2}(\rM')$ such that $\pi(x \wedge \omega') = \bOmega(P)$.
Write $\tV'' \subset V''$ for the subspace spanned by $\Res_i\bOmega(P)$ for $P \in \T^\starq -\T^0$ with $i$ as a facet and $Q \in \T^0$ on the other side.  We have shown that $\tV'' \subset  \Res_i( \pi(\omega' \wedge\oOS^{r-2}))$.

Suppose that $P \in \T^\starq - \T^0$ and $P,Q$ share the facet $i$, with $Q \notin \T^0$.  Then $\Res_i \bOmega(P) = \bOmega(P'')$ where $P'' \in (\T'')^\starq - (\T'')^0$.  By the inductive hypothesis applied to $\cM''$, there exists $y \in \oOS^{r-3}(\rM'')$ such that
$$
\pi''(\omega' \wedge y) = \bOmega(P''), \qquad \text{or} \qquad \omega' \wedge y= \bOmega(P'') \mod V''.
$$
We have $y \wedge e_i \in \oOS^{r-2}(\rM)$ satisfying 
$$
\Res_{i}(\omega' \wedge y \wedge e_i) = \omega' \wedge y = \bOmega(P'') \mod V''.
$$
Expanding $\omega' \wedge y \wedge e_i$ into the basis of canonical forms given by \cref{thm:basis}, we see that this implies
$$
\Res_{i}(\pi(\omega' \wedge y \wedge e_i)) = \bOmega(P'') \mod \tV''.
$$
Since we have shown that $\tV'' \subset \Res_i(\pi(\omega' \wedge \oOS^{r-2}))$, this shows that $\bOmega(P'') \in \Res_i(\pi(\omega' \wedge \oOS^{r-2}))$.
We conclude that $R  \subseteq \Res_i(\pi(\omega' \wedge \oOS^{r-2}))$ and it follows that $\pi(\omega' \wedge \oOS^{r-2}) = W$.
 \end{proof}
 
 \begin{proof}[Proof of \cref{thm:Aomoto}]
 Let $\omega = \omega' + a_i \be_i$ be an element of $\oOS^1(\rM)$.  Consider the linear map 
 $$
 f_{a_i}: \oOS^{r-2} \to W, \qquad z \longmapsto \pi(\omega \wedge z) = \pi(\omega' \wedge z) + a_i \pi(\be_i \wedge z ).
 $$
 Thus $f_{a_i} = f + a_i f'$, where $f$ is the linear map $z \mapsto \pi(\omega' \wedge z) $ discussed in \cref{prop:omega'}.  By \cref{prop:omega'}, $f$ has full rank.  It follows that for generic values of $a_i$ (including for all values of $a_i$ sufficiently close to $0$) we have that $f_{a_i}$ has full rank.  Thus for a generic $\omega \in \oOS^1(\rM)$, we have $\pi(\omega \wedge \oOS^{r-2} ) = W$.  Since $V$ has dimension equal to $|\T^0|$, the theorem then follows from \cref{thm:bounded}.  
 \end{proof}

\section{Examples}\label{sec:examples}

\subsection{Rank two}
We work with $R = \CC$.  Let $\bA$ be the projective hyperplane arrangement consisting of $n$ points $z_0 = \infty, z_1,z_2,\ldots, z_n$ on $\PP^1(\RR)$, arranged in order.  The corresponding oriented matroid $\cM$ has rank $r = 2$, with ground set $E = \{0,1,\ldots,n\}$.  We have
$$
 \oOS^0 = \CC \cdot 1, \qquad  \oOS^1 = \bigoplus_{i=1}^n  \CC\cdot \be_i, \qquad\text{where $\be_i := e_i - e_0$ as before.}
$$
The topes of $\cM$ can be identified with the chambers of $\bA$, which are the $(n+1)$ intervals:
$$
(-\infty,z_1), (z_1,z_2), (z_2,z_3),\ldots, (z_n,\infty).
$$
By reorienting $\cM$ if necessary, we may assume that they correspond to the topes 
$$
P_0 = (+,-,-,\ldots,-), P_1 = (+,+,-,\ldots,-), P_2 = (+,+,+,-,\ldots,-),\ldots, P_n = (+,+,\ldots,+)
$$
all of which satisfy $P(0) = +$.  In this case, we may choose the chirotope $\chi$ to be 
$$
\chi(i,j) = \begin{cases} + & \mbox{if $i < j$,} \\
- & \mbox{if $i > j$.}
\end{cases}
$$
A triangulation of $P_i$ is given by the basis $\{i,i+1\}$.  Formula \eqref{eq:cantriang} gives
$$
\bOmega(P_i,\chi) = (-1) \chi(i,i+1) (e_i - e_{i+1}) = e_{i+1} - e_i.
$$
The canonical forms for $P_0,P_1,\ldots,P_n$ are
$$
e_1 - e_0, e_2 - e_1, e_3- e_2,\ldots,e_0 - e_n,
$$
respectively.  If the general extension $q$ is made such that the corresponding hyperplane (a point in this case) in $\PP^1(\RR)$ lies in  the chamber $(z_n,\infty)$,  then $\T^q$ consists of the remaining $n$ topes, which form a basis of $\oOS^1$.  Let $i < j < n$ and consider the basis $B = \{i,j\}$.  Then $\T^q_B$ consists of the topes $P_i,P_{i+1},\ldots,P_{j-1}$, and \cref{prop:spans} reduces to 
$$
(+1)(-1)(e_i - e_j) = \sum_{a=i}^{j-1} \bOmega(P_a,\chi).
$$
The set $\T^0$ consists of the $n-1$ bounded topes:
$$
(z_1,z_2), (z_2,z_3),\ldots, (z_{n-1},z_n).
$$
Now let $\omega = \sum_{i=1}^n a_i \be_i = - (\sum_{i=1}^n a_i)e_0 + \sum_{i=1}^n a_i e_i$.  Using the relation 
$$
\omega = a_{n}(e_n - e_{n-1}) + (a_n + a_{n-1})(e_{n-1} - e_{n-2}) + \cdots + (a_n+ a_{n-1} + \cdots + a_1)(e_1- e_0)
$$
we verify \cref{thm:Aomoto}.  The genericity condition on $\omega$ for \cref{thm:Aomoto} to hold is $ (a_n+ a_{n-1} + \cdots + a_1) \neq 0$.

\subsection{Rank three}\label{ssec:rk3eg}

We give an explicit rank 3 realizable example, illustrating the formula pointed out in Remark~\ref{rem:Fil}.  Let the columns of the matrix
\[
\begin{bmatrix}
1 & 0 & -1 & 0 & -1 \\
0 & 1 & 0 & -1 & -1 \\
1 & 1 & 1 & 1 & 1 
\end{bmatrix},
\]
considered as linear functionals on $V = \RR^3$, realize an oriented matroid $\cM$ of rank $3$ on the ground set $E = \{1, 2, \dotsc, 5\}$.  Let $q$ be an additional linear functional given by the column vector $(0,0,1)^T$, and $H_q = \{q = 0\}$ the corresponding hyperplane.
These are illustrated below.

\begin{figure}[h!]
\centering
\includegraphics[height = 30mm]{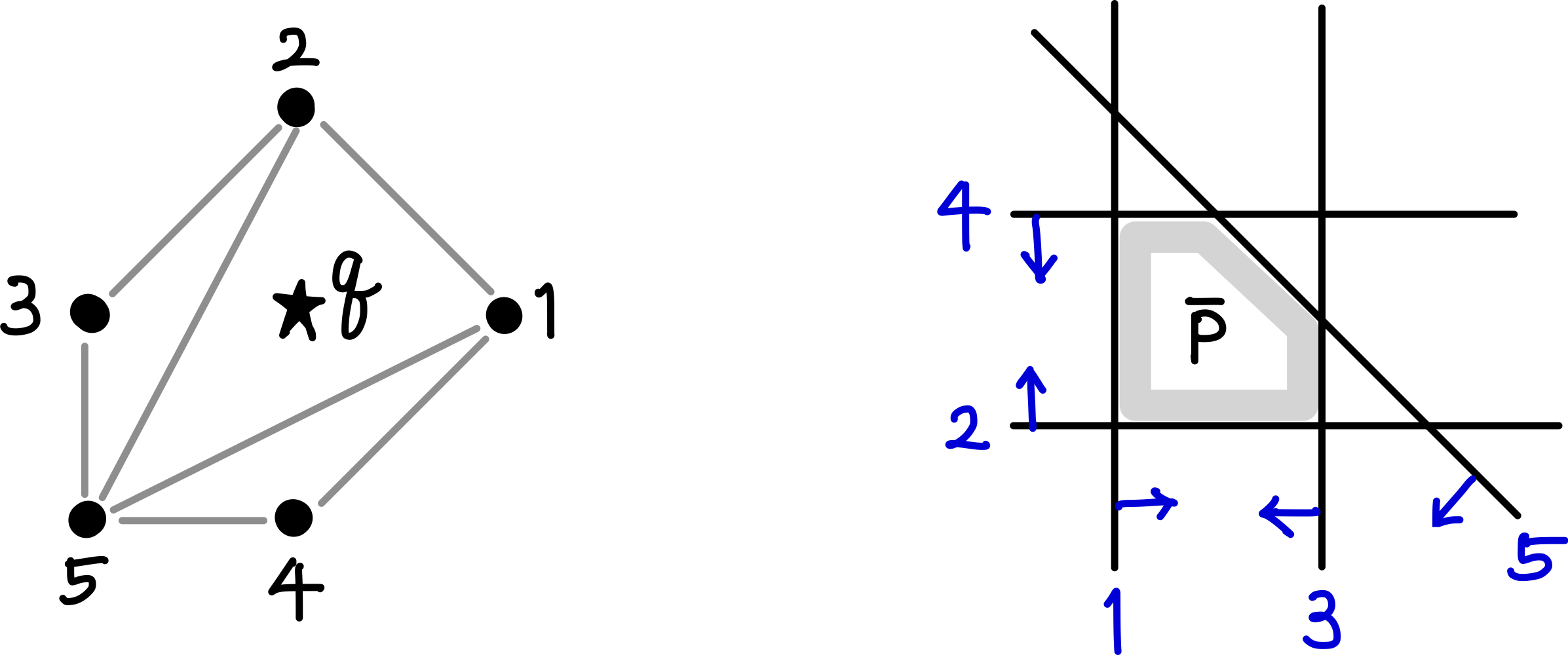}
\caption{Left: the affine diagram of the arrangement of points in $\PP(V^\vee)$.  Right: the corresponding hyperplane arrangement in $\RR^2 \cong \PP(V)\setminus \PP H_q$.}
\label{fig:rk3eg}
\end{figure}

Let $P$ be the tope $(+,\dotsc, +)$, which corresponds to be region $\overline P$ in gray on the right figure.  
With respect to the triangulation as drawn on the affine diagram on the left, its canonical form is
\begin{align*}
\Omega_{\overline P} &= \chi(125) \partial e_{125} + \chi(235)\partial e_{235} + \chi(145)\partial e_{145}\\
&= \Omega_{T_{125}} - \Omega_{T_{145}} - \Omega_{T_{235}}
\end{align*}
where $T_{ijk}$ denotes the bounded triangle in the right figure of Figure~\ref{fig:rk3eg} bounded by the lines labelled $i,j,k$.  Note that the signs of $\Omega_{T_{ijk}}$ agree with the sign $(-1)^{|C_{B\cup q}^-|-1}$ per Remark~\ref{rem:simplex}.

\small
\bibliography{EL_CanonFormOrientMat_refs}
\bibliographystyle{alpha}

\end{document}